\numberwithin{equation}{section}
\newtheorem{thm}{Theorem}[section]
\newtheorem{prop}[thm]{Proposition}
\newtheorem{cor}[thm]{Corollary}
\newtheorem{lem}[thm]{Lemma}
\theoremstyle{definition}
 \newtheorem{nt}[thm]{Notation}
\newtheorem{rmk}[thm]{Remark}
\newcommand{\sgn}{{\rm sign}}
\newcommand{\cS}{\mathcal{S}}
\newcommand{\UMD}{{\rm UMD}}
\newcommand{\cL}{L}
\newcommand{\cH}{\mathcal{H}}
\newcommand{\cA}{\mathcal{A}}
\newcommand{\cB}{\mathcal{B}}
\newcommand{\cC}{\mathcal{C}}
\newcommand{\cI}{\mathcal{I}}
\title{The best constants for operator Lipschitz functions on Schatten classes}
\author{M. Caspers, S. Montgomery-Smith, D. Potapov, F. Sukochev}
\address{M. Caspers,  Laboratoire de Math\'ematiques, Universit\'e de Franche-Comt\'e, 16 Route de Gray, 25030 Besan\c con, France}
\email{martijn.caspers@univ-fcomte.fr}
\address{S. Montgomery-Smith, Mathematics Department Columbia, University of Missouri, MO 65211, USA}
\email{stephen@missouri.edu}
\address{D. Potapov, F. Sukochev, School of Mathematics and Statistics, UNSW, Kensington 2052, NSW, Australia}
\email{d.potapov@unsw.edu.au}
\email{f.sukochev@unsw.edu.au}
\thanks{\noindent \today.   \\
The first author was supported by the ANR project: ANR-2011-BS01-008-01.\\
The last two authors are partially supported by the ARC}
\begin{document}

\thispagestyle{empty}

\maketitle

\normalsize

\begin{abstract}
Suppose that $f$ is a Lipschitz function on $\mathbb{R}$ with $\Vert f \Vert_{{\rm Lip}}\leq 1$. Let $A$ be a bounded self-adjoint operator on a Hilbert space $\cH$.   Let $p \in (1, \infty)$ and suppose that $x \in B(\cH)$ is an operator such that the commutator $[A, x]$ is contained in the Schatten class $\cS_p$. It is proved by the last two authors, that then also $[f(A), x] \in \cS_p$ and there exists a constant $C_p$ independent of $x$ and $f$ such that
\[
\Vert [f(A), x] \Vert_p \leq C_p \Vert [A,x] \Vert_p.
\]
The main result of this paper is to give a sharp estimate for $C_p$ in terms of $p$. Namely, we show that $C_p \sim \frac{p^2}{p-1}$. In particular, this gives the best estimates  for operator Lipschitz inequalities.

We treat this result in a more general setting. This involves commutators of $n$ self-adjoint   operators $A_1, \ldots, A_n$, for which we prove the analogous result. The case described here in the abstract follows as a special case.
\end{abstract}

\section{Introduction}

Recently, the  last two authors proved that Lipschitz functions on $\mathbb{R}$ act as operator Lipschitz functions on the Schatten  classes $\cS_p$ for all $p \in (1, \infty)$, see \cite{PotSuk12}, \cite{Sal09}.   That is, suppose that $f: \mathbb{R} \rightarrow \mathbb{R}$ is a Lipschitz function and
\[
\Vert f \Vert_{{\rm Lip}} = \sup_{\xi, \tilde{\xi}\in \mathbb{R}}  \frac{\vert f(\xi) - f(\tilde{\xi}) \vert }{\Vert \xi - \tilde{\xi}\Vert_1}  \leq 1.
\]
Let $p \in (1, \infty)$. Suppose that $A, B$ are bounded, self-adjoint operators such that $A-B \in \cS_p$. Then, it was proved in \cite{PotSuk12} that also $f(A) - f(B) \in \cS_p$ and there is a constant $C_p< \infty$ independent of $A, B$ and $f$ such that
\begin{equation}\label{EqnIntroLipschitz}
\Vert f(A) - f(B) \Vert_p \leq C_p \Vert A - B \Vert_p.
\end{equation}
We denote $C_p$ for the minimal constant for which the inequality (\ref{EqnIntroLipschitz}) holds.

For the case $p = 1$, the analogous result fails. That is, there is no constant $C_1$ such that the inequality (\ref{EqnIntroLipschitz}) holds as was proved in \cite{Dav88}. For the case $p = \infty$ the analogous statement also fails as was proved in \cite{Kat73}.

This raises the question of what the growth order of $C_p$ is as $p$ approaches either $1$ or $\infty$. In \cite{KPSS} it was proved that $C_p \preceq p^8$ as $p \rightarrow \infty$ and $C_p \preceq (p-1)^{-8}$ as $p \downarrow 1$. In fact, in \cite{KPSS} a more general result is covered involving an $n$-tuple of commuting self-adjoint (bounded) operators. We refer to \cite[Theorem 5.3]{KPSS} for the precise statement.

In \cite{PotSuk12} an estimate for the asymptotic behavior of $C_p$ was not mentioned explicitly. However, it is in principle possible to find an upper estimate for $C_p$ from the proof presented in \cite{PotSuk12}. These proofs involve the Marcinkiewicz multiplier theorem as well as diagonal truncation and do not lead to a sharp upper estimate of $C_p$.

The main result of this paper is a sharp estimate for $C_p$.  Namely,   we prove that $C_p \sim p$ as $p \rightarrow \infty$ and we prove that $C_p \sim (p-1)^{-1}$ as $p \downarrow 1$.  Our result is stated in terms of commuator estimates in Schatten classes. In particular, it sharpens the estimates found in \cite{KPSS} for $n$-tuples of commuting self-adjoint operators. Note that the lower estimate of this result can be found in \cite{Abd90}.

The novelty of our proof is that we apply the main result of \cite{GeiMonSak}. In \cite{GeiMonSak} sharp estimates were found for the action of a smooth, even multiplier that acts on vector valued $L^p$-spaces. The norm of such a multiplier can be expressed in terms of the UMD-constant of a Banach space (we recall the definition below). This result together with the so-called transference method forms the key argument that allows us to improve the known estimates for $C_p$.

This paper relates to the general interest of finding the best constants in non-commutative probability inequalities. In particular, major achievements have been made considering the best constants of Burkholder/Gundy inequalities \cite{JunXu05}, \cite{Ran05},  Doob and Stein inequalities \cite{JunXu05} and Khintchine inequalities \cite{Haa81}, \cite{HaaMus07}.

\vspace{0.3cm}

The structure of this paper is as follows. Section \ref{SectMultipliers} recalls the necessary theory on Fourier multipliers. In Section \ref{SectSpecialMultiplier} we construct a special multiplier that forms a key step for our main result. In Section \ref{SectDoubleOpInt} we recall the theory of double operator integrals and prove the necessary lemmas on discrete approximations.  Section \ref{SectCommutatorEstimate} contains our main result and the core of our proof.

\subsection*{General conventions}

For $p \in [1, \infty)$ we write $\cS_p$ for the Schatten-von Neumann classes. These are the non-commutative $L^p$-spaces associated with the bounded operators on a Hilbert space $\cH$ with respect to the standard trace $\tau$. For $p \in (1, \infty)$ we use $p' \in (1,\infty)$ to denote the conjugate exponent, which is defined by $\frac{1}{p}+\frac{1}{p'}=1$.
We use $\chi$ to denote an indicator function.

Let $C_p, D_p \in \mathbb{R}^+$ be constants depending on $p \in (1, \infty)$. We write
\[
C_p \sim D_p,
\]
if there are constants $a, b$ such that $a \leq C_p / D_p \leq b$ for all $p \in (1, \infty)$. In particular, in this case $C_p$ and $D_p$ have the same asymptotic behavior as $p \rightarrow \infty$ or $p \downarrow 1$.

\section{Construction of Fourier multipliers}\label{SectMultipliers}

We recall the theory of $L^p$-Fourier multipliers and their vector valued counterparts.
In this section $\xi = (\xi_1, \ldots, \xi_n)$  is always a vector in $\mathbb{R}^n$. $\mu$ is always   a number in $\mathbb{R}$.

\subsection{Multipliers} A function $m \in \cL^\infty(\mathbb{R}^n)$ defines a bounded linear map:
\[
T_m: \cL^2(\mathbb{R}^n) \rightarrow \cL^2(\mathbb{R}^n): f \mapsto ( \mathcal{F}_2^{-1} \circ m \circ \mathcal{F}_2  )(f).
\]
Here, $\mathcal{F}_2: \cL^2(\mathbb{R}^n) \rightarrow \cL^2(\mathbb{R}^n )$ is the Fourier transform that is defined for $f \in \cL^1(\mathbb{R}^n) \cap \cL^2(\mathbb{R}^n)$ as:
\[
(\mathcal{F}_2 f)(\xi) = \frac{1}{\sqrt{2\pi}} \int_{\mathbb{R}^n} f(s) e^{-i s \cdot \xi} ds.
\]
Let $p \in [1, \infty)$. Suppose that for every $f \in \cL^2(\mathbb{R}^n) \cap \cL^p(\mathbb{R}^n)$ we have $T_m(f) \in \cL^p(\mathbb{R}^n)$ and
\begin{equation}\label{EqnLpMultiplier}
T_m: \cL^2(\mathbb{R}^n) \cap \cL^p(\mathbb{R}^n) \rightarrow \cL^p(\mathbb{R}^n)
\end{equation}
 extends to a bounded map on $\cL^p(\mathbb{R}^n)$. Then, we call $m$ an $\cL^p$-multiplier and we keep denoting the extension of (\ref{EqnLpMultiplier}) by $T_m$. $m$ is called {\it homogeneous} (of degree 0) if for every $\lambda \in \mathbb{R}^+$ we have $m(\lambda \xi) =  m(\xi)$. We call $m$ {\it even} if $m(-\xi) = m(\xi)$. $m$ is called {\it odd} if $m(-\xi) = - m(\xi)$. A homogeneous $\cL^p$-multiplier is called smooth if it is smooth on the unit sphere in $\mathbb{R}^n$ (or equivalently, if it is smooth on $\mathbb{R}^n \backslash \{ 0 \}$).

\begin{rmk}\label{RmkAlgebra}
Let $p \in [1, \infty)$. By definition, the set of $\cL^p$-multipliers  forms an algebra.
\end{rmk}

\subsection{Vector valued multipliers} We are mostly concerned with vector valued counterparts of $\cL^p$-multipliers.
Let $E$ be a Banach space. Let $(X, \mu)$ be a $\sigma$-finite measure space and $p \in [1, \infty)$. Let $\cL^p_E(X) = \cL^p_E(X,\mu) $ denote the space of strongly measurable functions $f:  X \rightarrow E$ for which there is a separable subspace $E_0 \subseteq E$ such that $f(X) \subseteq E_0$ and
\[
\Vert f \Vert_{\cL^p_E(X)} := \left(\int_X \Vert f(x) \Vert^p d\mu(x)\right)^{\frac{1}{p}} < \infty.
\]
 Let $T: \cL^p(X)  \rightarrow \cL^p(X) $ be a bounded operator. Then, $T$ defines a linear map on the simple functions of $\cL^p_E(X)$ which we denote for the moment by
\[
 T(E): \sum_{k =1 }^n x_k \chi_{A_k} \mapsto  \sum_{k =1 }^n x_k T( \chi_{A_k} ), \qquad x_k \in E,\: A_k \subseteq X \textrm{ measurable}.
\]
In case,
\[
  \sup \left\{ \Vert T(E) \left( f \right) \Vert \mid f \textrm{ simple and } \Vert f \Vert_{\cL^p_E(X)} \leq 1
\right\}
\]
is finite, the map $T(E)$ extends to a bounded map $T(E): \cL^p_E(X) \rightarrow \cL^p_E(X)$.

\begin{nt}
We write $T$ for $T(E): \cL^p_E(X) \rightarrow \cL^p_E(X)$. It will always be clear from the context if $T$ acts on $\cL^p(X)$ or $\cL^p_E(X)$.
\end{nt}

In particular, we apply the previous   construction to the special case where $(X, \mu)$ is $\mathbb{R}^n$ equipped with the Lebesgue measure, $E$ is a non-commutative $\cL^p$-space and $T$ is $T_m$ for some $\cL^p$-multiplier $m$.


\begin{rmk}\label{RmkCoordintateTransform}
Suppose that $m$ is a $\cL^p$-multiplier such that $T_m: \cL^p_E(\mathbb{R}^n) \rightarrow \cL^p_E(\mathbb{R}^n)$ is bounded. Let $A: \mathbb{R}^n \rightarrow \mathbb{R}^n$ be a linear invertible transformation. Then, also $T_{m \circ A}:  \cL^p_E(\mathbb{R}^n) \rightarrow \cL^p_E(\mathbb{R}^n)$ is bounded with the same norm. This follows from the observation (see also \cite[p. 557]{GeiMonSak}):
\[
(T_{m \circ A}f)(\xi) = (T_m( f\circ A^{T}) )((A^{T})^{-1} \xi ).
\]
Here, $A^T$ is the transpose  of $A$.
\end{rmk}
\begin{rmk}\label{RmkIntegral}
Let $GL_n(\mathbb{R})$ denote the invertible $n\times n$-matrices. Let $A\in GL_n(\mathbb{R})$. For $f \in \cL^p_E(\mathbb{R}^n)$, let $A^\ast(f) = f \circ A$. So $A^\ast$ defines a bounded map on $\cL^p_E(\mathbb{R}^n)$,  (with bound given by the determinant of $A^{-1}$, as follows from a substitution of variables). Let
\[
[0, 1] \rightarrow GL_n(\mathbb{R}): t \mapsto A_t
\]
 be a continuous path. Then, $t \mapsto A_t^\ast$ is a strongly continous path with values in the bounded operators on $\cL^p_E(\mathbb{R}^n)$. Indeed, one can check that for simple functions $f \in \cL^p_E(\mathbb{R}^n)$ the path $t \mapsto A_t^\ast(f)$ is continous and then use the fact that $t \mapsto \Vert A_t^\ast\Vert$ is bounded. Let $m$ be a $\cL^p$-multiplier and let $E$ be a Banach space such that $T_m: \cL^p_E(\mathbb{R}^n) \rightarrow  \cL^p_E(\mathbb{R}^n)$  is bounded. Then, the strong integral $\int_0^1 T_{m \circ A_{t}} dt$ exists and defines a bounded map on $\cL^p_E(\mathbb{R}^n)$.
\end{rmk}

\subsection{Discrete multipliers}

 Let $m \in \cL^\infty(\mathbb{R}^n)$ be an  odd  $\cL^p$-multiplier. Put $\bar{m} \in \cL^\infty(\mathbb{Z}^n)$ by setting   $\bar{m}(k) = m(k), k \in \mathbb{Z}^n \backslash \{0\}$ and $\bar{m}(0) = 0$. For $f$ a finite trigonometric polynomial, we set
\[
(T_{\bar{m}}f)(\theta) =  \sum_{k \in \mathbb{Z}^n} \bar{m}(k) \hat{f}(k) e^{ik\theta}.
\]
Here, $\hat{f}(k) = \int_{\mathbb{T}^n} f(\theta) e^{-i k \cdot \theta} d\theta$, where the $n$-torus $\mathbb{T}^n$ is considered with the normalised Lebesgue measure. The following theorem gives a   sufficient condition on $m$ in order to extend $T_{\bar{m}}$ to a bounded map on $\cL^p(\mathbb{T}^n)$.

\begin{thm}[Theorem 3.6.7 of \cite{GrafakosI}]\label{ThmDiscrete}
Let $m \in \cL^\infty(\mathbb{R}^n)$ be an   odd  function that is smooth on $\mathbb{R}^n \backslash \{ 0 \}$. Let $p \in [1, \infty)$ and suppose that $m$ is a $\cL^p$-multiplier.
Let $E$ be a Banach space and suppose that $T_m: \cL^p_E(\mathbb{R}^n) \rightarrow \cL^p_E(\mathbb{R}^n)$ is bounded. Then,
\begin{equation}\label{EqnDiscreteVsContinuous}
\Vert T_{\bar{m}}: \cL^p_E(\mathbb{T}^n) \rightarrow \cL^p_E(\mathbb{T}^n) \Vert \leq
\Vert T_{m}: \cL^p_E(\mathbb{R}^n) \rightarrow \cL^p_E(\mathbb{R}^n) \Vert.
\end{equation}
\end{thm}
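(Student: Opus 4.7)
My plan is to use a transference argument of de Leeuw type that extracts the action of the discrete multiplier $T_{\bar m}$ on $\cL^p_E(\mathbb{T}^n)$ from $T_m$ on $\cL^p_E(\mathbb{R}^n)$ applied to Fourier-localised test functions near the integer lattice. Fix a Schwartz function $\phi$ on $\mathbb{R}^n$ whose Fourier transform $\widehat{\phi}$ is compactly supported in a ball $B(0, \delta)$ with $\delta < 1/2$, and for $\epsilon > 0$ set $\phi_\epsilon(x) := \phi(\epsilon x)$, so that $\widehat{\phi_\epsilon}$ is concentrated in $B(0, \delta \epsilon)$. Given a finite trigonometric polynomial $f(\theta) = \sum_k c_k e^{i k \cdot \theta}$ on $\mathbb{T}^n$, extended periodically to $\mathbb{R}^n$, I would form the test function $g_\epsilon := \phi_\epsilon \cdot f$, whose Fourier transform $\widehat{g_\epsilon}(\xi) = \sum_k c_k \widehat{\phi_\epsilon}(\xi - k)$ is supported in the disjoint union $\bigsqcup_k B(k, \delta \epsilon)$ of small balls around the lattice points.

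The smoothness of $m$ on $\mathbb{R}^n \setminus \{0\}$ then yields, for each non-zero $k \in \mathbb{Z}^n$, a uniform Taylor estimate $m(\xi) = m(k) + O(\epsilon)$ on $B(k, \delta \epsilon)$, so that $T_m$ acts on the Fourier piece $c_k \widehat{\phi_\epsilon}(\cdot - k)$ as pointwise multiplication by $m(k)$ up to an $\cL^p_E$-error negligible in the scaling $\epsilon^{n/p}\|\cdot\|$. A Weyl-equidistribution identity
$$
\epsilon^{n/p}\|\phi_\epsilon \cdot h\|_{\cL^p_E(\mathbb{R}^n)} \longrightarrow (2\pi)^{-n/p}\|\phi\|_{\cL^p(\mathbb{R}^n)} \cdot \|h\|_{\cL^p_E(\mathbb{T}^n)} \qquad (\epsilon \downarrow 0),
$$
valid for bounded periodic $h$, then combines to give
$$
\lim_{\epsilon \downarrow 0} \epsilon^{n/p}\|g_\epsilon\|_{\cL^p_E(\mathbb{R}^n)} = C_\phi \|f\|_{\cL^p_E(\mathbb{T}^n)}, \qquad \lim_{\epsilon \downarrow 0} \epsilon^{n/p}\|T_m g_\epsilon\|_{\cL^p_E(\mathbb{R}^n)} = C_\phi \|T_{\bar m} f\|_{\cL^p_E(\mathbb{T}^n)},
$$
where $C_\phi = (2\pi)^{-n/p}\|\phi\|_{\cL^p(\mathbb{R}^n)}$. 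Applying the boundedness hypothesis $\|T_m g_\epsilon\|_{\cL^p_E(\mathbb{R}^n)} \leq \|T_m\|\,\|g_\epsilon\|_{\cL^p_E(\mathbb{R}^n)}$, sending $\epsilon \downarrow 0$, and cancelling the common factor $C_\phi$ on both sides yields (\ref{EqnDiscreteVsContinuous}).

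The principal obstacle is the behaviour of $m$ at the origin: since $m$ is not required to be continuous at $0$, the Taylor-type approximation of the previous paragraph breaks down on the ball $B(0, \delta \epsilon)$, and the contribution of the $k = 0$ Fourier piece $c_0 T_m(\phi_\epsilon)$ cannot be directly identified with $\bar m(0) c_0 \phi_\epsilon = 0$. The oddness hypothesis on $m$ is used precisely here: one first reduces to mean-zero trigonometric polynomials $f$, which is permissible because $T_{\bar m}$ annihilates constants, and the odd symmetry of $m$ in a small neighbourhood of the origin is what makes the convention $\bar m(0) = 0$ compatible with the transference limit. The remaining technical points---uniform $\cL^p_E$-control of the Taylor remainder around each non-zero lattice point, the dominated-convergence justification of the equidistribution limit, and the passage from $\cL^p(\mathbb{R}^n)$-boundedness to $\cL^p_E(\mathbb{R}^n)$-boundedness along simple functions---are routine given that $\phi$ is Schwartz and the sum over $k$ is finite.
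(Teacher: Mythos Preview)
The paper does not give its own proof: it attributes the scalar case to \cite[Theorem~3.6.7]{GrafakosI}, remarks that the vector-valued extension follows by a mutatis mutandis copy, and refers to \cite[Lemma~2.2]{GeiMonSak} for the exact statement. Your de Leeuw transference outline is the standard argument used in those references, so at the strategic level your approach matches what the paper invokes.

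There is, however, a genuine gap in your handling of the frequency $k=0$. The reduction to mean-zero polynomials does \emph{not} yield the sharp inequality: from $\|T_{\bar m}g\|\le\|T_m\|\,\|g\|$ for mean-zero $g$ you obtain, for general $f$, only $\|T_{\bar m}f\|=\|T_{\bar m}(f-\hat f(0))\|\le\|T_m\|\,\|f-\hat f(0)\|$, and subtracting the mean is not a contraction on $\cL^p_E(\mathbb{T}^n)$ when $p\neq 2$ (already for $E=\mathbb{C}$, $n=1$, $p=1$, the function $f=3\chi_{[0,1/3]}$ has $\|f-\hat f(0)\|_1=4/3>1=\|f\|_1$). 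Notice also that once $c_0=0$ there is no $k=0$ piece at all and oddness is never used, which signals that this is not where the hypothesis actually enters. The correct role of oddness is to keep the full $f$ and pass through the duality pairing with an \emph{even} test function $\phi$: the $k=0$ contribution to $\langle T_m(\phi_\epsilon f),\phi_\epsilon h\rangle$ is $c_0\overline{d_0}\int m(\xi)\,|\widehat{\phi_\epsilon}(\xi)|^2\,d\xi$, which vanishes because $m$ is odd and $|\widehat{\phi_\epsilon}|^2$ is even---equivalently, oddness makes the origin a Lebesgue point of $m$ with value $\bar m(0)=0$, which is exactly the regularity hypothesis in Grafakos's formulation. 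Your direct-norm computation cannot dispose of the term $c_0\,T_m\phi_\epsilon$ in this way, since $\epsilon^{n/p}\|T_m\phi_\epsilon\|_p=\|T_{m(\epsilon\,\cdot)}\phi\|_p$ need not tend to zero (for $m$ homogeneous of degree $0$ it is independent of $\epsilon$); switching to the duality form of the transference is the cleanest fix.
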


\begin{rmk}
Theorem \ref{ThmDiscrete} was proved for $E = \mathbb{C}$ in \cite[Theorem 3.6.7]{GrafakosI}. For a general Banach space $E$, the statement follows from a mutatis mutandis copy of its proof.
The exact statement of Theorem \ref{ThmDiscrete} can also be found as \cite[Lemma 2.2]{GeiMonSak}.
\end{rmk}

\subsection{The UMD-property}

Let $E$ be a Banach space. $E$ is said to have the {\it UMD-property} (Unconditional Martingale Differences) if there exists a constant $C_p(E)$ with $p \in (1, \infty)$ such that for every probability measure space $(\Omega, \Sigma, \mu)$ and every sequence of $\sigma$-subalgebras $B_1 \subseteq B_2 \subseteq \ldots \subseteq \Sigma$ and every martingale difference sequence $\{ d_n \}_{n=1}^\infty$ with respect to $\{ B_n\}_{n=1}^\infty$ in $\cL^p_E(\Omega)$, the sequence  $\{ d_n \}_{n=1}^\infty$ satisfies:
\begin{equation}\label{EqnUMD}
\Vert \sum_{k=1}^n \epsilon_k \alpha_k d_k \Vert_{\cL^p_E(\Omega)} \leq C_p(E) \Vert \sum_{k=1}^n   \alpha_k d_k \Vert_{\cL^p_E(\Omega)}
\end{equation}
for every $\epsilon_k = \pm 1$ and scalars $\{ \alpha_k \}_{k=1}^\infty$ and all $n = 1, 2, \ldots$. We will denote the minimal constant $C_p(E)$ for which (\ref{EqnUMD}) holds by $\UMD_p(E)$. This constant is also called the {\it UMD-constant} of $E$.

\begin{thm}[Theorem 4.3 and Remark 4.4 of \cite{Ran02}]\label{ThmUMDConstantSp}
The Schatten class $\cS_p$ is a UMD-space for every $p \in (1, \infty)$. Moreover,
\[
\UMD_p(\cS_p) \sim \frac{p^2}{p-1}.
\]
\end{thm}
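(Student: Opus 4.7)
The plan is to combine an upper bound coming from non-commutative martingale transform estimates with a matching lower bound obtained by reduction to the scalar case. By the trace pairing $\cS_p^*=\cS_{p'}$ together with the general duality $\UMD_p(E)=\UMD_{p'}(E^*)$, and since the quantity $p^2/(p-1)=p+p'$ is symmetric under $p\leftrightarrow p'$, it suffices to treat $p\geq 2$.

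For the upper bound, fix a martingale $(f_n)=\sum_{k\leq n}d_k$ in $\cL^p_{\cS_p}(\Omega)$ and signs $\epsilon_k\in\{\pm 1\}$. The key observation is that the conditional column and row square functions
\[
S_c(f)=\Bigl(\sum_k \mathbb{E}_{k-1}(d_k^*d_k)\Bigr)^{1/2},\qquad S_r(f)=\Bigl(\sum_k \mathbb{E}_{k-1}(d_kd_k^*)\Bigr)^{1/2},
\]
together with the $\ell^p$-diagonal $\bigl(\sum_k\|d_k\|_p^p\bigr)^{1/p}$, are manifestly invariant under $d_k\mapsto\epsilon_k d_k$. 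The non-commutative Burkholder inequality of Pisier-Xu and Junge-Xu then compares $\|f\|_p$ in both directions to the maximum of these three quantities, which immediately forces $\|\sum_k\epsilon_k d_k\|_p$ to be of the same order as $\|f\|_p$. Tracking the sharp constants in both directions---or, as in the cited work of Randrianantoanina, running a non-commutative Cuculescu/Gundy weak-type $(1,1)$ argument and interpolating with the trivial $L^2$ bound (where $\UMD_2(\cS_2)=1$ since $\cS_2$ is Hilbertian)---produces the asymptotic upper bound $C\,p^2/(p-1)$.

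For the matching lower bound, fix an orthonormal basis of $\cH$ and embed $\ell^p$ isometrically into $\cS_p$ as diagonal operators. Since the UMD inequality passes to closed subspaces, $\UMD_p(\cS_p)\geq\UMD_p(\ell^p)\geq\UMD_p(\mathbb{R})$, and Burkholder's sharp scalar estimate $\UMD_p(\mathbb{R})=\max(p,p')-1$ closes the loop because $\max(p,p')-1\sim p+p'\sim p^2/(p-1)$.

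The principal difficulty lies in the sharp tracking of constants in the non-commutative weak-type argument: because operators cannot be truncated pointwise, the Cuculescu good/bad decomposition must be implemented via spectral projections applied from both sides, yielding cross terms that have to be absorbed without sacrificing the asymptotic exponent through the subsequent Marcinkiewicz interpolation. A secondary obstacle is securing the reverse direction of the non-commutative Burkholder inequality with sharp constant of order $p$ for $p\geq 2$, since any weaker growth there would yield a suboptimal upper bound on $\UMD_p(\cS_p)$.
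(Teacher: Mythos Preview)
The paper does not supply a proof of this statement: it is quoted verbatim as Theorem~4.3 and Remark~4.4 of \cite{Ran02} and used as a black box in Section~\ref{SectCommutatorEstimate}. So there is no in-paper argument to compare your proposal against.

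That said, your sketch is broadly faithful to how the cited reference actually proceeds. Randrianantoanina establishes a weak-type $(1,1)$ bound for non-commutative martingale transforms via a Cuculescu-type decomposition and then interpolates with the trivial $L^2$ case, which is precisely the second route you describe; your first route through the two-sided non-commutative Burkholder inequality with sharp constants is not the path taken there, and as you yourself flag, making that route quantitative enough to recover the exact order $p^2/(p-1)$ requires the sharp reverse inequality, which is a separate and nontrivial input. For the lower bound, your reduction to the scalar constant via the diagonal embedding $\ell^p\hookrightarrow\cS_p$ and Burkholder's $\UMD_p(\mathbb{R})=\max(p,p')-1$ is correct and standard.

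One point worth tightening: your invocation of $\UMD_p(E)=\UMD_{p'}(E^*)$ to reduce to $p\ge 2$ is fine in spirit, but note that what one actually uses is $\UMD_p(\cS_p)=\UMD_{p'}(\cS_{p'})$, which follows from the duality of martingale difference sequences together with $\cS_p^*=\cS_{p'}$; the general identity $\UMD_p(E)=\UMD_{p'}(E^*)$ as stated compares different exponents on the same space and is not quite what is needed here.
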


\begin{rmk}
Theorem \ref{ThmUMDConstantSp} is also valid if $\cS_p$ is replaced by a non-commutative $\cL^p$-space assocated with an arbitrary von Neumann algebra $M$. This particularly applies to Haagerup $\cL^p$-spaces associated with a non-semi-finite von Neumann algebra  $M$, see \cite{Ran02}. For Haagerup $L^p$-spaces we refer to \cite{Haa77}, \cite{Ter81}.
\end{rmk}

\subsection{The Hilbert transform}\label{SectHilbertTransform}
Consider the function $h: \mathbb{R}^n \rightarrow \mathbb{C}: \xi \mapsto i \: \sgn(\xi_1)$ where we use the convention $\sgn(0) = 0$. Let $E$ be a UMD-space. Then, for every $p \in (1, \infty)$,
\begin{equation}\label{EqnHilbertTransform}
T_h: \cL^p_E(\mathbb{R}^n) \rightarrow \cL^p_E(\mathbb{R}^n)
\end{equation}
is bounded. In fact, $E$ is a UMD-space if and only if (\ref{EqnHilbertTransform}) is bounded for every $p \in (1, \infty)$ \cite{Bou83}, \cite{Bur83}. $T_h$ is also called the {\it Hilbert transform}, see also  \cite[Chapter 4]{GrafakosI}.


\subsection{The Riesz transform} \label{SectRiesz}
Consider the function $r_j: \mathbb{R}^n \rightarrow \mathbb{C}: \xi \mapsto i \frac{\xi_j}{\Vert \xi \Vert_2}$ where we use the assumption $r_j(0) = 0$. Let $E$ be a UMD-space. Then, for every $p \in (1, \infty)$,
\begin{equation}\label{EqnRieszTransform}
T_{r_j}: \cL^p_E(\mathbb{R}^n) \rightarrow \cL^p_E(\mathbb{R}^n)
\end{equation}
is bounded.

$T_{r_j}$ is also called the {\it Riesz transform}, see also \cite[Chapter 4]{GrafakosI}.

\section{Construction of a special multiplier}\label{SectSpecialMultiplier}
The goal of this section is to construct a specific   smooth homogeneous even multiplier $m_{j}$. This multiplier plays an essential role in Section \ref{SectCommutatorEstimate}.   In this section $\xi = (\xi_1, \ldots, \xi_n)$  is always a vector in $\mathbb{R}^n$. $\mu$ is always   a number in $\mathbb{R}$.

\begin{lem}\label{LemFirstMultiplier}
For $1 \leq j \leq n$, there exists a function $m_{1,j}$ on $\mathbb{R}^{n+1}$ such that
\begin{equation}\label{EqnFirstMultiplier}
m_{1,j}(\xi, \mu) = \left\{
\begin{array}{ll}
  \frac{\mu}{\Vert \xi \Vert_2} \frac{\xi_j}{\Vert \xi\Vert_2}& \textrm{ if } \frac{\vert \mu \vert}{ \Vert \xi \Vert_1} < 1, \\
0 &   \textrm{ if } \frac{\vert \mu \vert}{ \Vert \xi \Vert_1} > 1,
\end{array}
\right.
\end{equation}
and moreover, such that for every $p \in (1, \infty)$ and UMD-space $E$, the map $T_{m_{1,j}}: \cL^p_E(\mathbb{R}^{n+1}) \rightarrow \cL^p_E(\mathbb{R}^{n+1})$ is bounded.
\end{lem}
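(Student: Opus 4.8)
The plan is to build $m_{1,j}$ out of the Riesz-type multipliers and the Hilbert transform from Section~\ref{SectMultipliers}, combined with a homogenization/averaging trick to produce the sharp cutoff $\{|\mu|<\|\xi\|_1\}$. First observe that the factor $\frac{\mu}{\|\xi\|_2}\frac{\xi_j}{\|\xi\|_2}$ is, up to constants and a change of variables, a product of two Riesz symbols on $\mathbb{R}^{n+1}$: writing the frequency variable as $(\xi,\mu)\in\mathbb{R}^n\times\mathbb{R}$, the function $(\xi,\mu)\mapsto \frac{\mu}{\|(\xi,\mu)\|_2}\cdot\frac{\xi_j}{\|(\xi,\mu)\|_2}$ is a smooth homogeneous even symbol and its multiplier is bounded on $\cL^p_E(\mathbb{R}^{n+1})$ for any UMD-space $E$ by composing two Riesz transforms (Section~\ref{SectRiesz}) and using that $\cL^p$-multipliers form an algebra (Remark~\ref{RmkAlgebra}). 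The genuine obstacle is that $m_{1,j}$ is \emph{not} this symbol: it agrees with it only on the region $|\mu|<\|\xi\|_1$, vanishes on $|\mu|>\|\xi\|_1$, and the dividing surface is $\{|\mu|=\|\xi\|_1\}$ rather than the sphere, so $m_{1,j}$ is not homogeneous in a way compatible with a single spherical symbol, and in fact it is only claimed to be a bounded $\cL^p$-multiplier, not a smooth one.

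The key device to handle the cutoff is to realize the indicator $\chi_{\{|\mu|<\|\xi\|_1\}}$ as an average of shifted half-space indicators, i.e.\ of Hilbert-transform-type symbols, and thereby invoke boundedness of the Hilbert transform on $\cL^p_E$ together with the integration lemma (Remark~\ref{RmkIntegral}) and the coordinate-transform invariance (Remark~\ref{RmkCoordintateTransform}). Concretely, for fixed signs $\epsilon=(\epsilon_1,\dots,\epsilon_n)\in\{\pm1\}^n$, the linear functional $\xi\mapsto\sum_i \epsilon_i\xi_i$ equals $\|\xi\|_1$ on the orthant determined by $\epsilon$, so the hyperplane $\{\mu=\sum_i\epsilon_i\xi_i\}$ restricts on that orthant to a piece of $\{|\mu|=\|\xi\|_1\}$; the associated jump between $0$ and $\sgn(\mu-\sum_i\epsilon_i\xi_i)$ is exactly a composition of $T_h$ with an invertible linear change of variables of $\mathbb{R}^{n+1}$, hence bounded on $\cL^p_E$ with norm controlled by $\mathrm{UMD}_p(E)$. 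Summing/integrating these orthant-by-orthant pieces against the smooth Riesz factor — using that the Riesz symbol is itself a bounded multiplier and the algebra property to multiply it by each half-space symbol — assembles $m_{1,j}$. One then checks that on $|\mu|<\|\xi\|_1$ all the half-space indicators cooperate to give the factor $1$ and on $|\mu|>\|\xi\|_1$ they cancel to give $0$, matching (\ref{EqnFirstMultiplier}); the behaviour on the measure-zero set $|\mu|=\|\xi\|_1$ is irrelevant for the $\cL^p$-multiplier.

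I would carry this out in the following order: (i) record that the Riesz factor $(\xi,\mu)\mapsto \mu\,\xi_j/\|(\xi,\mu)\|_2^2$ gives a bounded multiplier on $\cL^p_E(\mathbb{R}^{n+1})$ via Section~\ref{SectRiesz} and Remark~\ref{RmkAlgebra}; (ii) for each orthant sign vector $\epsilon$, write the half-space symbol $\xi\mapsto \tfrac12(1+\sgn(\mu-\sum_i\epsilon_i\xi_i))$ times (the characteristic function of that orthant in $\xi$) as a linear image of the Hilbert transform symbol, bounded on $\cL^p_E$ by Remark~\ref{RmkCoordintateTransform} and Section~\ref{SectHilbertTransform}; (iii) multiply by the Riesz factor using the algebra property; (iv) sum over the $2^n$ orthants (or, if one prefers a smooth-dependence argument, integrate a smooth partition using Remark~\ref{RmkIntegral}) and verify pointwise a.e.\ that the result is $m_{1,j}$. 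The main obstacle is step~(ii)–(iv): making the combinatorial bookkeeping of orthants match the $\ell^1$-norm cutoff exactly, and ensuring the resulting finite sum of multipliers genuinely reproduces $m_{1,j}$ almost everywhere rather than some symbol agreeing with it only on part of the support — in particular handling the overlaps of the hyperplanes $\{\mu=\sum\epsilon_i\xi_i\}$ across adjacent orthants without introducing spurious contributions.
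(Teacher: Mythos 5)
Your plan starts from a false premise that the core difficulty cannot recover from. You claim that $\frac{\mu}{\|\xi\|_2}\frac{\xi_j}{\|\xi\|_2}$ is, ``up to constants and a change of variables,'' a product of two Riesz symbols on $\mathbb{R}^{n+1}$, and in step (i) you propose to use the genuinely bounded symbol $(\xi,\mu)\mapsto \mu\,\xi_j/\|(\xi,\mu)\|_2^2$. But there is no linear change of variables on $\mathbb{R}^{n+1}$ taking $\|(\xi,\mu)\|_2$ to $\|\xi\|_2$: the former is a Euclidean norm of the full vector, while the latter ignores the $\mu$-coordinate. The target symbol $\mu\,\xi_j/\|\xi\|_2^2$ is in fact \emph{unbounded} on $\mathbb{R}^{n+1}$ (let $\xi\to0$ with $\mu$ fixed), so it cannot be a Riesz-type multiplier at all. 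Consequently the product of your Riesz factor with any cutoff $\chi_{\{|\mu|<\|\xi\|_1\}}$ yields $\frac{\mu\xi_j}{\|\xi\|_2^2+\mu^2}\chi_{\{|\mu|<\|\xi\|_1\}}$, which differs from $m_{1,j}$ by the nontrivial factor $1+\mu^2/\|\xi\|_2^2$; your verification in step (iv) would fail pointwise, not just on a null set.

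The paper's proof instead handles the unbounded ratio $\mu/\|\xi\|_2$ by factoring it as $\frac{\mu}{\|\xi\|_1}\cdot\frac{\|\xi\|_1}{\|\xi\|_2}$ and producing \emph{each} factor as an honest bounded multiplier. The multiplier $K$ is built by averaging tilted Hilbert-transform symbols $h_{1,t}$ over $t\in[-1,1]$ (Remark \ref{RmkIntegral}) and then summing over orthants; it equals $i\mu/\|\xi\|_1$ on $\{|\mu|\le\|\xi\|_1\}$ and the bounded $i\,\sgn(\mu)$ outside, so it is globally bounded. The multiplier $R$ is an orthant sum of one-dimensional Riesz symbols $i\,\epsilon\cdot\xi/\|\xi\|_2$ times products of half-space symbols; it equals $i\|\xi\|_1/\|\xi\|_2$ on $\{|\mu|<\|\xi\|_1\}$ and $0$ outside. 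The final factor $r_j(\xi)=i\xi_j/\|\xi\|_2$ is a Riesz transform in the $\xi$-variables only (constant in $\mu$, still bounded on $\cL^p_E(\mathbb{R}^{n+1})$ by a Fubini argument). The product $iKRr_j$ then equals $m_{1,j}$. Your outline reproduces pieces of this (the orthant decomposition, the averaged Hilbert transform, Remarks \ref{RmkAlgebra}, \ref{RmkCoordintateTransform}, \ref{RmkIntegral}), but it is missing the essential factorization that makes the unbounded piece $\mu/\|\xi\|_2$ expressible via bounded multipliers, and it identifies the wrong Riesz factor. As written, the proposal does not prove the lemma.
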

\begin{proof}
Recall that we use the convention $\sgn(0) = 0$. Let $h(\xi) = i \: \sgn(\xi_1)$, see also Section \ref{SectHilbertTransform}. We start with observing that
\begin{equation}\label{EqnHilbertIndicator}
 \frac{1}{2} ( -i h(\xi)+1) =
\left\{
\begin{array}{ll}
1 & \textrm{ if } \xi_1 > 0, \\
\frac{1}{2} & \textrm{ if } \xi_1 = 0, \\
0 & \textrm{ if } \xi_1 < 0.
\end{array}
\right.
\end{equation}
Hence, $ \frac{1}{2} ( -i h(\xi)+1)$ is equal to the indicator function $\chi_{[0,\infty)}(\xi_j)$ except for the point 0.  Let
\[
h_{a,b}(\xi, \mu) = i \: \sgn(b \xi_1 + \ldots + b\xi_n + a \mu)
\]
 be the multiplier associated with the Hilbert transform  (\ref{EqnHilbertTransform}) precomposed with the linear map
\[
A_{a,b}: (\xi_1, \ldots, \xi_n, \mu) \mapsto (b \xi_1 + \ldots + b\xi_n + a \mu, \xi_2, \ldots, \xi_n, \mu),
\]
 see Remark \ref{RmkCoordintateTransform}. Put $k = \frac{1}{2} \int_{-1}^1 h_{1,t} dt$. So,
\begin{equation}\label{EqnKexpression}
k(\xi, \mu) = \frac{1}{2} i \int_{-1}^1 \sgn(t\xi_1+ \ldots + t\xi_n + \mu) dt.
\end{equation}
If $\vert \mu \vert \geq \vert \xi_1 + \ldots + \xi_n \vert$, then $\sgn(t\xi_1+ \ldots + t\xi_n +\mu) = \sgn(\mu)$ for every $t \in [-1,1]$. If $\vert \mu \vert \leq \vert \xi_1 + \ldots + \xi_n\vert$, then (\ref{EqnKexpression}) is equal to
\[
\begin{split}
k(\xi, \mu) = &\frac{1}{2} i  \int_{-1}^{-\frac{\mu}{\xi_1 + \ldots +\xi_n}} \!\!\!\!\!  \sgn(t\xi_1 + \ldots + t\xi_n + \mu)dt +  \frac{1}{2} i  \int^{1}_{-\frac{\mu}{\xi_1 + \ldots +\xi_n}} \!\!\!\!\!  \sgn(t\xi_1 + \ldots + t\xi_n + \mu)dt   \\
= & \frac{1}{2} i \int_{-1}^{-\frac{\mu}{\xi_1 + \ldots +\xi_n}} \!\!\!\!\! \sgn(-\xi_1 - \ldots - \xi_n) dt
  +  \frac{1}{2} i \int^{1}_{-\frac{\mu}{\xi_1 + \ldots +\xi_n}} \!\!\!\!\! \sgn( \xi_1 + \ldots + \xi_n) dt \\
= & i \frac{\mu}{\vert \xi_1 + \ldots + \xi_n\vert}.
\end{split}
\]
So  we conclude,
\[
k(\xi, \mu) = \left\{
\begin{array}{ll}
i \frac{\mu}{\vert \xi_1 + \ldots + \xi_n \vert} & \textrm{ if } \vert \mu \vert \leq \vert \xi_1 + \ldots + \xi_n \vert,\\
i \: \sgn(\mu) & \textrm{ if } \vert \mu \vert \geq \vert \xi_1 + \ldots + \xi_n \vert.
\end{array}
\right.
\]
 For $\epsilon = (\epsilon_1, \ldots, \epsilon_n) \in \{ -1, 1 \}^n$, put
\[
k_{\epsilon}(\xi, \mu) = k(\epsilon_1 \xi_1, \ldots, \epsilon_1 \xi_1, \mu) \cdot \Pi_{j=1}^n  \frac{1}{2} (  -i h(\epsilon_j \xi_j)+1).
\]
  We can explicitly describe $k_\epsilon$. The next formula can be determined by first considering the value of $k_{\epsilon}(\xi, \mu)$ for  the case that for every $1 \leq j \leq n$ we have $\xi_j \geq 0$. In that case, keeping in mind (\ref{EqnHilbertIndicator}), one arrives at equation (\ref{EqnExplicitK}) below.  Similarly, we can compute $k_\epsilon(\xi, \mu)$ for other signs of $\xi_j$.  This results in the following expression:
\begin{equation}\label{EqnExplicitK}
k_{\epsilon}(\xi, \mu) = \left\{
\begin{array}{ll}
i  \frac{1}{2^{l(\xi)}} \frac{\mu}{\vert \xi_1 \vert + \ldots + \vert \xi_n \vert}& \textrm{ if } \vert \mu \vert \leq \vert \xi_1 \vert + \ldots + \vert \xi_n \vert  \textrm{ and } \forall j: \: \epsilon_j \xi_j \geq 0,\\
i \: \frac{1}{2^{l(\xi)}}\: \sgn(\mu)   & \textrm{ if } \vert \mu \vert \geq \vert \xi_1 \vert + \ldots + \vert \xi_n \vert \textrm{ and } \forall j: \: \epsilon_j \xi_j \geq 0, \\
0 & {\rm else},
\end{array}
\right.
\end{equation}
where $l(\xi)$ is the number of coordinates $j$ for which $\xi_j = 0$.
Put $K = \sum_{\epsilon \in \{ -1, 1\}^n} k_{\epsilon}$. Then, treating again the different possibilities for the signs of $\xi_j$ separately, one computes
\begin{equation}\label{EqnExplicitBigK}
K(\xi, \mu) = \left\{
\begin{array}{ll}
i \frac{ \mu}{\Vert \xi \Vert_1} & \textrm{ if } \vert \mu \vert \leq \Vert \xi \Vert_1, \\
i\: \sgn(\mu) & \textrm{ if } \vert \mu \vert > \Vert \xi \Vert_1.
\end{array}
\right.
\end{equation}

Next, for $\epsilon = (\epsilon_1, \ldots, \epsilon_n) \in \{ -1, 1\}^n$. Consider the multiplier:
\[
\begin{split}
r_\epsilon(\xi, \mu) = & i \frac{\epsilon \cdot \xi}{\Vert \xi \Vert_2} \cdot \Pi_{j=1}^n \frac{1}{2} (  -i h(\epsilon_j \xi_j)+1) \: \cdot\: \\
 & \left(  \frac{1}{2} (-i  h(\mu)+1)  \:  \frac{1}{2} (-i  h (\epsilon \cdot \xi - \mu)+1) + \frac{1}{2} (  -i h(-\mu)+1)  \: \frac{1}{2} (  -i h(\epsilon \cdot \xi + \mu) +1) \right).
\end{split}
\]
Note that for $(\xi, \mu)$ to be in the support of $r_\epsilon$, we must have for all $1 \leq j \leq n$ that $\sgn(\xi_j) = \epsilon_j$ or $\xi_j = 0$. Moreover, every $(\xi, \mu)$ in the support of $r_\epsilon$ must satisfy $\Vert \xi \Vert_1 \geq \mu$ in case $\mu \geq 0$ and $\Vert \xi \Vert_1 \leq \mu$ in case $\mu \leq 0$.
Then,  with $l(\xi)$ as before, and taking into account that $\epsilon \cdot \xi = \Vert \xi \Vert_1$,
\[
 r_\epsilon(\xi, \mu) =
\left\{
\begin{array}{ll}
i  \frac{1}{2^{l(\xi)}}  \frac{\Vert \xi \Vert_1}{\Vert \xi \Vert_2} & \textrm{ if } \vert \mu \vert < \Vert \xi \Vert_1 \textrm{ and } \forall j: \epsilon_j \xi_j \geq 0,\\
0 &  \textrm{ if } \vert \mu \vert > \Vert \xi \Vert_1 \textrm{ or } \exists j: \epsilon_j \xi_j < 0.
\end{array}
\right.
\]
Put $R = \sum_{\epsilon \in \{ -1, 1\}^n} r_{\epsilon}$. Then,
\begin{equation}\label{EqnExplicitBigR}
R(\xi, \mu) =
\left\{
\begin{array}{ll}
i \frac{\Vert \xi \Vert_1}{\Vert \xi \Vert_2} &  \textrm{ if } \vert \mu \vert < \Vert \xi \Vert_1,\\
0 &  \textrm{ if } \vert \mu \vert > \Vert \xi \Vert_1.
\end{array}
\right.
\end{equation}
Recall the Riesz transform $r_j$ from Section \ref{SectRiesz}. We define the multiplier,
\[
m_{1,j}(\xi, \mu) = i K(\xi, \mu) R(\xi, \mu) r_j(\xi).
\]
 Then, it follows from (\ref{EqnExplicitBigK}), (\ref{EqnExplicitBigR}) and (\ref{EqnRieszTransform}) that $m_{1,j}$ satisfies (\ref{EqnFirstMultiplier}).



Let $E$ be a UMD-space and let $p \in (1, \infty)$. By construction of $m_{1,j}$ the map
\[
T_{m_{1,j}}: \cL^p_E(\mathbb{R}^{n+1}) \rightarrow \cL^p_E(\mathbb{R}^{n+1})
\]
 is bounded as follows from Remarks \ref{RmkAlgebra} and \ref{RmkIntegral}   and the fact that the Hilbert transform and Riesz transform are bounded operators on $\cL^p_E(\mathbb{R}^n)$.

\end{proof}

\begin{lem}\label{LemMultiplierEven}
There exists a  homogeneous even function $m_{j}$ on $\mathbb{R}^{n+1}$ that is smooth on $\mathbb{R}^{n+1} \backslash \{ 0\}$ such that
\begin{equation}\label{EqnEvenMultiplier}
m_{j}(\xi, \mu) =
\begin{array}{ll}
  \frac{\mu}{\Vert \xi \Vert_2} \frac{\xi_j}{\Vert \xi \Vert_2} & \textrm{ if } \frac{\vert \mu \vert}{ \Vert \xi \Vert_1} \leq 1,
\end{array}
\end{equation}
and moreover, for every $p \in (1, \infty)$ and UMD-space $E$, the map $T_{m_{j}}: \cL^p_E(\mathbb{R}^{n+1}) \rightarrow \cL^p_E(\mathbb{R}^{n+1})$ is bounded.
\end{lem}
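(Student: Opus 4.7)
The plan is to regularise $\mu\xi_j/\Vert\xi\Vert_2^2$, which already takes the prescribed values on $\{|\mu|\le\Vert\xi\Vert_1\}$ and whose only singularity lies on the $\mu$-axis $\{\xi=0\}$ (disjoint from that region), by multiplying it with a smooth, even, homogeneous cutoff that vanishes in a conic neighborhood of the $\mu$-axis. Pick a smooth $\phi:\mathbb{R}\to[0,1]$ with $\phi\equiv 1$ on $[-n,n]$ and $\phi\equiv 0$ outside $[-(n+1),n+1]$, and set
\[
m_j(\xi,\mu) = \phi\!\left(\frac{\mu^2}{\Vert\xi\Vert_2^2}\right)\frac{\mu\xi_j}{\Vert\xi\Vert_2^2}.
\]

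Both factors are homogeneous of degree $0$ and invariant under $(\xi,\mu)\mapsto-(\xi,\mu)$, so $m_j$ is homogeneous and even. Smoothness on $\{\xi\ne 0\}$ is clear; near any $(0,\mu_0)$ with $\mu_0\ne 0$, the ratio $\mu^2/\Vert\xi\Vert_2^2>n+1$ throughout a small enough neighborhood, so $\phi$ (hence $m_j$) vanishes identically there. On $\{|\mu|\le\Vert\xi\Vert_1\}$ Cauchy--Schwarz gives $\mu^2\le\Vert\xi\Vert_1^2\le n\Vert\xi\Vert_2^2$, so $\phi=1$ and (\ref{EqnEvenMultiplier}) holds.

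The main work is the $\cL^p_E$-boundedness of $T_{m_j}$. I would decompose $m_j=m_{1,j}+w$, so that the first summand is handled by Lemma \ref{LemFirstMultiplier}. The correction $w=m_j-m_{1,j}$ vanishes on $\{|\mu|<\Vert\xi\Vert_1\}$ and equals $\phi(\mu^2/\Vert\xi\Vert_2^2)\,\mu\xi_j/\Vert\xi\Vert_2^2$ on $\{|\mu|>\Vert\xi\Vert_1\}$. Following the template of Lemma \ref{LemFirstMultiplier}, I would build $w$ from Hilbert and Riesz transforms: the characteristic cutoff on $\{|\mu|>\Vert\xi\Vert_1\}$ appears in the multiplier $K$ (which equals $i\,\mathrm{sgn}(\mu)$ exactly on that region, while $R$ vanishes there), and the remaining smooth angular factor $\phi(\mu^2/\Vert\xi\Vert_2^2)\,\mu/\Vert\xi\Vert_2$ --- being a function of $\mu/\Vert\xi\Vert_2$ alone --- should be synthesised as an integral of Hilbert-type multipliers in the spirit of the formula (\ref{EqnKexpression}) that built $K$ itself, before multiplying by the Riesz transform $r_j$.

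The main obstacle is precisely this last step: packaging the smooth angular factor together with the support cutoff $\{|\mu|>\Vert\xi\Vert_1\}$ as an explicit product/integral of Hilbert and Riesz transforms, so that Remarks \ref{RmkAlgebra} and \ref{RmkIntegral}, combined with the UMD boundedness of these transforms, yield boundedness of $T_w$ on $\cL^p_E(\mathbb{R}^{n+1})$.
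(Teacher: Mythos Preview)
Your explicit formula $m_j(\xi,\mu)=\phi(\mu^2/\Vert\xi\Vert_2^2)\,\mu\xi_j/\Vert\xi\Vert_2^2$ is a perfectly good candidate for the \emph{function} $m_j$: it is homogeneous, even, smooth away from the origin, and equals the prescribed expression on $\{|\mu|\le\Vert\xi\Vert_1\}$. That part is cleaner than the paper's construction.

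The gap is exactly where you locate it: the $L^p_E$-boundedness. Your decomposition $m_j=m_{1,j}+w$ leaves you with a correction $w$ supported on $\{|\mu|>\Vert\xi\Vert_1\}$ that has a genuine jump across the cone $|\mu|=\Vert\xi\Vert_1$ (there $\mu^2/\Vert\xi\Vert_2^2\le n$, so $\phi=1$ and $w$ jumps from $0$ to $\mu\xi_j/\Vert\xi\Vert_2^2$). Synthesising such a piece from Hilbert and Riesz transforms in the style of Lemma~\ref{LemFirstMultiplier} is not impossible, but it amounts to redoing that lemma for the complementary region, and you do not carry it out.

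The paper avoids this altogether by never writing $m_j$ explicitly and then decomposing. Instead it \emph{defines} $m_j$ as an average of dilates of $m_{1,j}$:
\[
m_j=\int_0^1 s(\lambda)\,m_{\lambda,j}\,d\lambda,\qquad m_{\lambda,j}(\xi,\mu)=\tfrac{1}{\lambda}\,m_{1,j}(\xi,\lambda\mu),
\]
with $s$ smooth, supported in $[\tfrac12,\tfrac34]$, and $\int s=1$. Each $m_{\lambda,j}$ is $m_{1,j}$ composed with an invertible linear map, so by Remark~\ref{RmkCoordintateTransform} $T_{m_{\lambda,j}}$ has the same $L^p_E$-norm as $T_{m_{1,j}}$; then Remark~\ref{RmkIntegral} bounds $T_{m_j}$ immediately. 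Smoothness is obtained as a by-product: the averaging in $\lambda$ smears the sharp boundary $\lambda|\mu|=\Vert\xi\Vert_1$ of $m_{\lambda,j}$ into a smooth transition, and one checks $m_j$ is smooth on three overlapping cones covering $\mathbb{R}^{n+1}\setminus\{0\}$. The point is that this single averaging step delivers smoothness and $L^p_E$-boundedness simultaneously, so there is no residual piece $w$ to build by hand.
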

\begin{proof}

Let $m_{1,j}$ be the $\cL^p$-multiplier of Lemma \ref{LemFirstMultiplier}. For every $\lambda \in \mathbb{R}^+$ the function $m_{\lambda,j} := \frac{1}{\lambda}m_{1,j}(\xi, \lambda \mu)$ is also a $\cL^p$-multiplier, see Remark \ref{RmkCoordintateTransform}. Note that,
\begin{equation}\label{EqnTranslatedMultiplier}
m_{\lambda,j}(\xi, \mu) = \left\{
\begin{array}{ll}
 \frac{\mu}{\Vert \xi \Vert_2}\frac{\xi_j}{\Vert \xi \Vert_2} & \textrm{ if } \lambda \: \vert \mu \vert  < \Vert \xi \Vert_1, \\
0 &   \textrm{ if } \lambda \: \vert \mu \vert > \Vert \xi \Vert_1.
\end{array}
\right.
\end{equation}

Let $s: [0, 1] \rightarrow [0,\infty)$ be a smooth function with support contained in $[\frac{1}{2}, \frac{3}{4}]$ and $\int_0^{1} s(\theta) d\theta = 1$. Set,
\begin{equation}\label{EqnSmoothing}
m_{j}   = \int _0^{1} s(\lambda) m_{\lambda,j}\: d\lambda.
\end{equation}
Consider the areas
\[
\begin{split}
A_1 = &  \left\{ (\xi, \mu) \in \mathbb{R}^{n+1} \mid  \Vert \xi \Vert_1   < \frac{1}{2}  \vert \mu \vert  \right\}, \\
A_2 = & \left\{ (\xi, \mu) \in \mathbb{R}^{n+1} \mid  \Vert \xi \Vert_1  > \frac{3}{4} \vert \mu \vert \right\}, \\
A_3 = & \left\{ (\xi, \mu)\in \mathbb{R}^{n+1}  \mid  \frac{1}{3}\vert\mu\vert <  \Vert \xi \Vert_1 < \vert\mu\vert   \right\}.
\end{split}
\]
We have $A_1 \cup A_2 \cup A_3 = \mathbb{R}^{n+1} \backslash \{ 0 \}$. Now, we check that $m_j$ is smooth on each of these areas.
For $(\xi, \mu) \in A_1$, we find that $m_{j}(\xi, \mu) = 0$ as follows from (\ref{EqnTranslatedMultiplier}) together with the fact that the support of $s$ is contained in $[\frac{1}{2}, \frac{3}{4}]$. So $m_j$ is smooth in $A_1$.
For $(\xi, \mu) \in A_2$, we find that
\[
m_{j}(\xi, \mu) =  \frac{  \mu }{\Vert \xi \Vert_2} \frac{ \xi_j }{\Vert \xi \Vert_2}.
\]
Indeed, this follows again from  (\ref{EqnTranslatedMultiplier}) together with the fact that the support of $s$ is contained in $[\frac{1}{2}, \frac{3}{4}]$.
 So $m_j$ is smooth on $A_2$. Since every $(\xi, \mu) \in A_2$ satisfies $\frac{\vert \mu \vert}{\Vert \xi \Vert_1} \leq 1$, this also proves that $m_{j}$ satisfies (\ref{EqnEvenMultiplier}). Define $S(t) = \int_0^t s(\lambda) d\lambda$, which is a smooth function on the open interval $(0,1)$. For $(\xi, \mu) \in A_3$ we find that
\[
\begin{split}
m_{j}(\xi, \mu) = & \int_{0}^{\frac{\Vert \xi \Vert_1}{\vert \mu \vert}} s(\lambda) m_{\lambda,j}(\xi, \mu) d\lambda +   \int_{\frac{\Vert \xi \Vert_1}{\vert \mu \vert}}^{1} s(\lambda) m_{\lambda,j}(\xi, \mu) d\lambda   \\
= & \int_{0}^{\frac{\Vert \xi \Vert_1}{\vert \mu \vert}} s(\lambda) d\lambda \: \cdot \: \frac{\mu}{\Vert \xi \Vert_2}  \frac{\xi_j}{\Vert \xi \Vert_2} + 0 \\
= &  S\left( \frac{\Vert \xi \Vert_1}{\vert \mu \vert}\right) \frac{\mu}{\Vert \xi \Vert_2} \frac{\xi_j}{\Vert \xi \Vert_2}.
\end{split}
\]
Here, the second equality follows from (\ref{EqnTranslatedMultiplier}). The other equalities follow from the definitions.
Hence, we see that  $m_{j}$ is smooth on $A_3$. We conclude that $m_j$ is smooth on $\mathbb{R}^{n+1}\backslash \{0\}$.

Let $E$ be a UMD-space and let $p \in (1, \infty)$. In Lemma \ref{LemFirstMultiplier} we proved that $T_{m_{1,j}}: \cL^p_E(\mathbb{R}^n) \rightarrow \cL^p_E(\mathbb{R}^n)$ is bounded. The definition of $T_{m_{j}}$  together with Remark \ref{RmkIntegral} implies that also $T_{m_{j}}: \cL^p_E(\mathbb{R}^n) \rightarrow \cL^p_E(\mathbb{R}^n)$ is bounded. Since $m_{1,j}$ is  even, also $m_j$ is even.
\end{proof}

The following theorem forms the key step in finding the best constants for commutator estimates in Schatten classes.

\begin{thm}[see Theorem 3.1 of \cite{GeiMonSak}]\label{ThmUMDEstimate}
Let $p \in (1, \infty)$ and let $E$ be a UMD-space. There exists a constant $C$ that is independent of $p$, $j$ and $E$ such that:
\[
    \Vert T_{m_{j}}: \cL^p_{E}(\mathbb{R}^{n+1}) \rightarrow \cL^p_E(\mathbb{R}^{n+1})\Vert \leq C \cdot \UMD_p(E).
\]
\end{thm}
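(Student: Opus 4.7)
The plan is to invoke Theorem 3.1 of \cite{GeiMonSak} essentially as a black box. That theorem states that whenever $m$ is a function on $\mathbb{R}^{N} \setminus \{0\}$ which is smooth, homogeneous of degree $0$, and even, the associated Fourier multiplier satisfies
\[
\Vert T_m : \cL^p_E(\mathbb{R}^N) \rightarrow \cL^p_E(\mathbb{R}^N) \Vert \leq C(m) \cdot \UMD_p(E),
\]
with a constant $C(m)$ depending only on finitely many $C^k$-seminorms of the restriction of $m$ to the unit sphere $S^{N-1}$, and in particular independent of $p \in (1,\infty)$ and of the UMD space $E$.

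The first step is to verify the hypotheses. By Lemma \ref{LemMultiplierEven} the multiplier $m_{j}$ is smooth on $\mathbb{R}^{n+1}\backslash\{0\}$, even, and homogeneous of degree $0$. These are exactly the three conditions required, so the theorem applies with $N = n+1$ and immediately yields a bound of the form $\Vert T_{m_j} \Vert \leq C(m_j) \cdot \UMD_p(E)$.

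The second step is to upgrade this to a bound whose constant is independent of $j \in \{1, \ldots, n\}$. For this I would exploit the built-in symmetry of the construction: the function $m_{j}(\xi,\mu)$ is obtained from $m_{1}(\xi,\mu)$ by the orthogonal permutation swapping the first and $j$-th spatial coordinates. Since this permutation preserves $S^{n}$, every $C^k$-seminorm of $m_{j}|_{S^n}$ equals the corresponding seminorm of $m_{1}|_{S^n}$, so $C(m_j) = C(m_1)$. Equivalently, Remark \ref{RmkCoordintateTransform} gives directly that $\Vert T_{m_j}\Vert = \Vert T_{m_1}\Vert$. Taking $C := C(m_1)$ produces the desired universal constant.

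I do not anticipate any substantial obstacle in this step. The analytic heart of the argument, namely producing a smooth, even, homogeneous representative of the discontinuous multiplier of Lemma \ref{LemFirstMultiplier}, has already been carried out in Lemma \ref{LemMultiplierEven}, and the sharp $\UMD_p(E)$-dependence is precisely what Theorem 3.1 of \cite{GeiMonSak} delivers. The only bookkeeping point is uniformity in $j$, which the permutation symmetry disposes of immediately.
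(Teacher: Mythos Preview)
Your proposal is correct and matches the paper's approach: the paper does not supply a proof for this theorem, but simply records it as a direct consequence of Theorem~3.1 of \cite{GeiMonSak} applied to the smooth, even, homogeneous multiplier $m_j$ produced in Lemma~\ref{LemMultiplierEven}. Your additional observation that the permutation symmetry (via Remark~\ref{RmkCoordintateTransform}) yields $\Vert T_{m_j}\Vert = \Vert T_{m_1}\Vert$, and hence uniformity of the constant in $j$, is a valid elaboration that the paper leaves implicit.
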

\begin{rmk}
Let $E$ be a UMD-space and let $p \in (1, \infty)$. Using \cite[Proposition 3.8]{GeiMonSak} it follows directly that for any homogenenous even function $m \in \cL^\infty(\mathbb{R}^{n+1})$ that is smooth on $\mathbb{R}^{n+1}\backslash \{ 0\}$, the transform $T_{m}: \cL^p_E(\mathbb{R}^{n+1}) \rightarrow \cL^p_E(\mathbb{R}^{n+1})$ is bounded. This observation could be used to supply an alternative proof of Lemma \ref{LemMultiplierEven}. Here, we have chosen to give a self-contained proof.
\end{rmk}

\section{Double operator integrals}\label{SectDoubleOpInt}

The goal of this section is to recall the basic notions of double operator integrals \cite{PWS02}. We prove the necessary results in order to see that certain double operator integrals may be approximated by discrete versions of double operator integrals.
Troughout this section, $\xi = (\xi_1, \ldots, \xi_n), \tilde{\xi} = (\tilde{\xi}_1, \ldots, \tilde{\xi}_n)$ are vectors in $\mathbb{R}^n$.   Recall that $\tau$ denotes the standard semi-finite trace on the bounded operators on a Hilbert space $\cH$.

\vspace{0.3cm}

Let $E$  be  a spectral measure on $\mathbb{R}^n$ having compact support  taking values in the orthogonal projections on a Hilbert space $\cH$. $E$ generates an $n$-tuple of commuting self-adjoint bounded  operators
\begin{equation}\label{EqnArowDefinition}
\cA = (A_1, \ldots, A_n), \qquad \textrm{ with } A_k = \int_{\mathbb{R}^n} \xi_k dE(\xi).
\end{equation}
We also set
\[
f(\cA) = \int_{\mathbb{R}^n} f(\xi) dE(\xi).
\]

Let $A, B  \subseteq \mathbb{R}^n$ be   measurable subsets. The mapping $(E \otimes E)(A \times B)(x) = E(A) x E(B), x \in \cS_2$ defines an orthogonal projection on $\cS_2$. The mapping naturally extends to a spectral measure on the Borel sets of $\mathbb{R}^n \times \mathbb{R}^n$. We denote this measure by $F$.

Let $\phi: \mathbb{R}^n \times \mathbb{R}^n$ be a bounded Borel function. The mapping
\[
\cI_\phi = \int_{\mathbb{R}^n}\int_{\mathbb{R}^n} \phi(\xi, \tilde{\xi})dF(\xi, \tilde{\xi}),
\]
defines a bounded operator on  $\cS_2$. $\cI_\phi$ is called the {\it double operator integral} of $\phi$ with respect to the measure $E$. If $\cI_\phi: \cS_2 \cap \cS_p \rightarrow \cS_p$ admits a bounded extension to $\cS_p$, then we keep denoting this map with $\cI_\phi$. Suppose that $\phi(\xi, \tilde{\xi}) = f(\xi) - f(\tilde{\xi})$ for a Borel function $f$ on $\mathbb{R}^n$. Then,
\begin{equation}\label{EqnDoublOpCommutator}
\cI_\phi ( x ) =  f(\cA) x - x f(\cA), \qquad x \in \cS_2.
\end{equation}
Define $\delta(\xi, \tilde{\xi}) = 1$ if $\xi = \tilde{\xi}$ and $\delta(\xi, \tilde{\xi}) = 0$ if $\xi \not = \tilde{\xi}$. An element $x \in \cS_2$ is called {\it off-diagonal} (with respect to $E$) if
\[
\cI_{\delta}(x)  = \int_{\mathbb{R}^n}\int_{\mathbb{R}^n} \delta(\xi, \tilde{\xi}) F(\xi, \tilde{\xi})(x) = 0.
\]
For $x, y \in \cS_2$, we define a finite measure on  $\mathbb{R}^n \times \mathbb{R}^n$ by
\[
\nu_{y,x}(\Omega) := \tau(y\cI_{\chi_{\Omega}} x), \qquad \Omega \subseteq \mathbb{R}^n \times \mathbb{R}^n \textrm{ a Borel set}.
\]

\begin{rmk}
In case the operators $A_1, \ldots, A_n$ are unbounded, the analysis below becomes much more intricate. One has to treat the domains of the various operators and commutators very carefully, see for example \cite{PotSuk08}.
\end{rmk}

\begin{lem}\label{LemDuhamel}
Let $B_j$ $1 \leq j \leq n$ be a tuple of bounded commuting operators. Similarly, let $C_j$ $1 \leq j \leq n$ be a tuple of bounded commuting operators. Put $\cB = (B_1, \ldots, B_n)$ and $\cC = (C_1, \ldots, C_n)$. Then, for all $s \in \mathbb{R}^n$,
\[
\Vert e^{i s \cdot \cB} - e^{i s \cdot \cC} \Vert \leq \sum_{j=1}^n \vert s_j \vert \Vert B_j - C_j \Vert,
\]
where we use the notation $s\cdot \cB  = s_1 B_1  + \ldots + s_n B_n$.
\end{lem}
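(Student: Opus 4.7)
The plan is to prove this by a standard Duhamel-type interpolation between the two one-parameter groups generated by $s\cdot \cB$ and $s\cdot \cC$. Set
\[
U(\tau) = e^{i\tau(s\cdot\cB)}, \qquad V(\tau) = e^{i\tau(s\cdot\cC)}.
\]
These are uniformly (norm-)continuous one-parameter groups, since $s\cdot\cB$ and $s\cdot\cC$ are bounded, and in the relevant setting (where the $B_j$ and $C_j$ arise from spectral measures as in the preceding discussion) they are unitary, so $\Vert U(\tau)\Vert = \Vert V(\tau)\Vert = 1$.

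Next, I would apply the fundamental theorem of calculus to the norm-differentiable path $\tau \mapsto U(\tau)V(1-\tau)$. This gives the telescoping identity
\[
U(1) - V(1) \;=\; \int_0^1 \frac{d}{d\tau}\bigl(U(\tau)V(1-\tau)\bigr)\, d\tau.
\]
Because the $B_j$ commute pairwise, $s\cdot\cB$ commutes with $U(\tau)$; likewise $s\cdot\cC$ commutes with $V(1-\tau)$. Differentiating the product therefore yields
\[
\frac{d}{d\tau}\bigl(U(\tau)V(1-\tau)\bigr) \;=\; i\,U(\tau)\bigl(s\cdot\cB - s\cdot\cC\bigr)V(1-\tau).
\]
Substituting this back, taking operator norms, and using $\Vert U(\tau)\Vert = \Vert V(1-\tau)\Vert = 1$, I would bound
\[
\Vert U(1) - V(1)\Vert \;\leq\; \Vert s\cdot\cB - s\cdot\cC\Vert \;=\; \Bigl\Vert \sum_{j=1}^n s_j(B_j - C_j)\Bigr\Vert,
\]
and a final application of the triangle inequality produces the claimed estimate $\sum_{j=1}^n \vert s_j\vert \Vert B_j - C_j\Vert$.

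There is no real conceptual obstacle here; the argument is a routine Duhamel calculation which is why the lemma is called \emph{Duhamel}. The only point that merits a moment of attention is the commutativity used when differentiating the product $U(\tau)V(1-\tau)$: this relies on the assumption that the $B_j$ (respectively, the $C_j$) commute among themselves, so that $s\cdot\cB$ commutes with $U(\tau)$ via the absolutely convergent power-series definition of the exponential. Norm-continuity of the integrand in $\tau$ (immediate from boundedness of the generators) makes the passage of the operator norm inside the integral completely standard, so no further delicacy is required.
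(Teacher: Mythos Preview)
Your argument is correct and is essentially the Duhamel calculation the paper has in mind, but the organization differs slightly. The paper first quotes the single-operator inequality $\Vert e^{iB}-e^{iC}\Vert\le\Vert B-C\Vert$ (Duhamel's formula, \cite{Win}) and then telescopes coordinate by coordinate, replacing $B_j$ by $C_j$ one at a time so that each step contributes $\vert s_j\vert\,\Vert B_j-C_j\Vert$. You instead carry out the Duhamel interpolation directly on the pair $s\cdot\cB$, $s\cdot\cC$ and apply the triangle inequality only at the very end. Your route is a little more streamlined, since it avoids introducing the mixed sums $s_1C_1+\ldots+s_{j-1}C_{j-1}+s_jB_j+\ldots+s_nB_n$; the paper's route has the mild advantage of isolating the scalar inequality as a citable lemma.

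One small remark: your justification that ``the $B_j$ commute pairwise, so $s\cdot\cB$ commutes with $U(\tau)$'' is unnecessary. Any bounded operator commutes with its own exponential via the power series, so $s\cdot\cB$ commutes with $e^{i\tau(s\cdot\cB)}$ regardless of whether the individual $B_j$ commute. The pairwise commutativity hypothesis is not actually used in your argument (nor, for that matter, in the paper's).
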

\begin{proof}
Using Duhamel's formula \cite{Win}, see also \cite[Lemma 8]{PotSuk12} with $r = 1$, one finds that for  any two self-adjoint operators $B$ and $C$ we have
\begin{equation}\label{EqnDuhamel}
\Vert e^{i B } - e^{i  C} \Vert \leq  \Vert B - C \Vert.
\end{equation}
Therefore, using first the triangle inequality and then (\ref{EqnDuhamel}),
\[
\begin{split}
\Vert e^{i s \cdot \cB} - e^{i s \cdot \cC} \Vert \leq & \sum_{j=1}^n  \Vert e^{i(s_1C_1 + \ldots + s_{j-1}C_{j-1} + s_j B_j + \ldots + s_n B_n)} -  e^{i(s_1C_1 + \ldots + s_{j}C_{j} + s_{j+1} B_{j+1} + \ldots + s_n B_n)} \Vert \\
 \leq &  \sum_{j=1}^n \vert s_j \vert \Vert B_j - C_j \Vert.
\end{split}
\]
\end{proof}

 For $l \in \mathbb{N}^\ast$, let
\[
U_l = \{ (\xi, \tilde{\xi}) \in \mathbb{R}^n \times \mathbb{R}^n \mid \Vert \xi - \tilde{\xi} \Vert_2 < \frac{1}{l} \}.
\]
 Then, $U_l$ is an open neighbourhood of the diagonal of $\mathbb{R}^n \times \mathbb{R}^n$.

\begin{prop}\label{PropDoubleOpIntError}
Let $p \in (1, \infty)$. Let $y \in \cS_p \cap \cS_2$ be such that there exists a $U_l, l \in \mathbb{N}^\ast$ such that we have $\int_{U_l} dF(\xi, \tilde{\xi})(y) = 0$. Let $\phi: \mathbb{R}^n \times \mathbb{R}^n \rightarrow \mathbb{R}$ be such that there exists a Schwartz function $\phi_0: \mathbb{R}^n \times \mathbb{R}^n \rightarrow \mathbb{R}$ for which $\phi(\xi, \tilde{\xi}) = \phi_0(\xi, \tilde{\xi})$  for every $(\xi, \tilde{\xi}) \in \mathbb{R}^n \times \mathbb{R}^n \backslash U_{l+1}$.  For $m \in \mathbb{Z}$ define a discrete spectral measure $E_m$
\[
E_m(\Omega) = \sum_{k \in \mathbb{Z}^n, {\rm s.t.} \frac{k}{m} \in \Omega} E\left(\left[ \frac{k_1}{m}, \frac{k_1+1}{m} \right) \times \ldots \times   \left[ \frac{k_n}{m}, \frac{k_n+1}{m} \right)\right), \qquad \Omega \subseteq \mathbb{R}^n \textrm{ a Borel set.}
\]
Consider the double operator integral $\cI_\phi$ of $\phi$ with respect to $E$. And similarly, let $\cI^m_\phi$ be the double operator integral of $\phi$ with respect to $E_m$. Then, for any $z \in \cS_{p'} \cap \cS_2$,
\[
\tau\left( z \cI^m_\phi y\right) \rightarrow \tau( z \cI_\phi y ), \qquad {\rm as} \quad m \rightarrow \infty.
\]
\end{prop}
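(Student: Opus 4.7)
The plan is to reduce the convergence to the case of a Schwartz symbol and then use a Fourier-inversion plus Duhamel argument. The key observation is that, because $y$ is annihilated by $\cI_{\chi_{U_l}}$ and $\phi-\phi_0$ is supported in $U_{l+1}\subset U_l$, the correction term from replacing $\phi$ by $\phi_0$ vanishes both for the true DOI and, for $m$ large, for the discrete DOI as well.

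\textbf{Step 1 (reduction to the Schwartz symbol).} Since $\phi=\phi_0$ off $U_{l+1}$, the function $\psi:=\phi-\phi_0$ is a bounded Borel function supported in $U_{l+1}\subset U_l$; in particular $\psi=\psi\,\chi_{U_l}$. Because $F=E\otimes E$ is a spectral measure (so functional calculus gives $\cI_{fg}=\cI_{f}\cI_{g}$ on $\cS_2$) and $\cI_{\chi_{U_l}}(y)=0$ by hypothesis, one has
\[
\cI_{\psi}(y)=\cI_{\psi\chi_{U_l}}(y)=\cI_{\psi}\bigl(\cI_{\chi_{U_l}}(y)\bigr)=0.
\]
Hence $\cI_{\phi}(y)=\cI_{\phi_0}(y)$ in $\cS_2$, and consequently $\tau(z\cI_{\phi}y)=\tau(z\cI_{\phi_0}y)$.

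\textbf{Step 2 (the discrete correction vanishes for large $m$).} Writing $B_k=\prod_{i=1}^{n}[k_i/m,(k_i+1)/m)$, the discrete DOI equals
\[
\cI_{\psi}^{m}(y)=\sum_{(k_1,k_2)\in S_m}\psi(k_1/m,k_2/m)\,E(B_{k_1})\,y\,E(B_{k_2}),
\]
where $S_m=\{(k_1,k_2): (k_1/m,k_2/m)\in U_{l+1}\}$. For $\xi\in B_{k_1}$, $\tilde\xi\in B_{k_2}$ the triangle inequality gives $\|\xi-\tilde\xi\|_2\le \|k_1/m-k_2/m\|_2+2\sqrt{n}/m<1/(l+1)+2\sqrt{n}/m$, which is less than $1/l$ as soon as $m>2\sqrt{n}\,l(l+1)$. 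Thus, for such $m$, $B_{k_1}\times B_{k_2}\subset U_l$ whenever $(k_1,k_2)\in S_m$. Expressing $\cI_{\psi}^m(y)=\cI_{\tilde\eta_m}(y)$ for the step function $\tilde\eta_m=\sum_{(k_1,k_2)\in S_m}\psi(k_1/m,k_2/m)\,\chi_{B_{k_1}\times B_{k_2}}$, which is supported in $U_l$, the identity $\cI_{\tilde\eta_m}(y)=\cI_{\tilde\eta_m}\bigl(\cI_{\chi_{U_l}}y\bigr)=0$ follows as in Step 1. Consequently $\cI_{\phi}^{m}(y)=\cI_{\phi_0}^{m}(y)$ for all $m$ large enough.

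\textbf{Step 3 (convergence for the Schwartz symbol via Fourier-inversion and Duhamel).} Because $\phi_0\in\mathcal{S}(\mathbb{R}^{2n})$ and $E$ is compactly supported, Fourier inversion and Fubini give
\[
\cI_{\phi_0}(y)=(2\pi)^{-n}\!\!\iint \widehat{\phi_0}(s,t)\,e^{is\cdot\cA}\,y\,e^{it\cdot\cA}\,ds\,dt,
\]
and the analogous formula holds for $\cI_{\phi_0}^{m}(y)$ with $\cA$ replaced by $\cA_m=\int \xi\,dE_m(\xi)$. By construction $\|A_j-A_{m,j}\|\le 1/m$, so Lemma \ref{LemDuhamel} yields $\|e^{is\cdot\cA}-e^{is\cdot\cA_m}\|\le \|s\|_1/m$. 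Combining this with the trivial identity
\[
e^{is\cdot\cA_m}ye^{it\cdot\cA_m}-e^{is\cdot\cA}ye^{it\cdot\cA}=(e^{is\cdot\cA_m}-e^{is\cdot\cA})y\,e^{it\cdot\cA_m}+e^{is\cdot\cA}y(e^{it\cdot\cA_m}-e^{it\cdot\cA}),
\]
and H\"older's inequality $|\tau(zw)|\le\|z\|_{p'}\|w\|_p$, I get
\[
\bigl|\tau\bigl(z(\cI_{\phi_0}^{m}y-\cI_{\phi_0}y)\bigr)\bigr|\le \frac{\|z\|_{p'}\|y\|_p}{(2\pi)^n\,m}\iint |\widehat{\phi_0}(s,t)|(\|s\|_1+\|t\|_1)\,ds\,dt.
\]
The integral is finite because $\widehat{\phi_0}$ is Schwartz, so the right-hand side tends to $0$ as $m\to\infty$.

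\textbf{Anticipated difficulty.} The conceptual core is Step 2, balancing the mesh size $1/m$ against the gap $1/l-1/(l+1)=1/(l(l+1))$ so that the discrete cut-off is forced into $U_l$. The other technical points—Fubini for the Fourier-inversion representation, identification of the discrete DOI with the step-function DOI, and dominated convergence—are routine given the compact support of $E$ and the Schwartz assumption on $\phi_0$.
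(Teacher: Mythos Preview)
Your proposal is correct and follows essentially the same three-step approach as the paper: reduce to the Schwartz symbol $\phi_0$ on both sides (using the support/off-diagonal hypothesis), then control the difference via the Fourier-inversion representation together with Lemma~\ref{LemDuhamel} and $\|A_j-A_{m,j}\|\le 1/m$. The only cosmetic difference is that the paper estimates $\|\cI_{\phi_0}(y)-\cI_{\phi_0}^m(y)\|_2$ directly (and then pairs with $z\in\cS_2$), whereas you pair with $z$ first and use the $\cS_p$--$\cS_{p'}$ H\"older inequality; either route gives the claimed convergence.
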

\begin{proof}
Let $\cI_{\phi_0}$ and $\cI_{\phi_0}^m$ be the double operator integrals of $\phi_0$ with respect to $E$ and respectively $E_m$. Let $U_l^c$ be the complement of $U_l$ in $\mathbb{R}^n \times \mathbb{R}^n$. Our assumption on $y$ and $\phi_0$ implies that
\begin{equation}\label{EqnFirstZero}
\begin{split}
\cI_\phi(y) =&  \int_{\mathbb{R}^n} \int_{\mathbb{R}^n} \phi(\xi, \tilde{\xi}) dF(\xi, \tilde{\xi}) (y) =
\int_{U_l^c} \phi(\xi, \tilde{\xi}) dF(\xi, \tilde{\xi})(y) \\
= & \int_{U_l^c} \phi_0(\xi, \tilde{\xi}) dF(\xi, \tilde{\xi})(y)
= \int_{\mathbb{R}^n}  \int_{\mathbb{R}^n}  \phi_0(\xi, \tilde{\xi}) dF(\xi, \tilde{\xi})(y)
=   \cI_{\phi_0} (y).
\end{split}
\end{equation}
For $k \in \mathbb{Z}^n$ and $m \in \mathbb{N}$, we set $p_{k,m} = E_m(\frac{k}{m})$. Let $m$ be large (in fact $m  \geq \sqrt{n} l(l+1)$ suffices) and let $k,\tilde{k} \in \mathbb{Z}^n$ be such that $(\frac{k}{m}, \frac{\tilde{k}}{m}) \in U_{l+1}$. Then, since $\int_{U_l} dF(\xi, \tilde{\xi})(y) = 0$ we have that $p_{k,m} y p_{\tilde{k},m} = 0$. Hence, we compute
\begin{equation}\label{EqnSecondZero}
\begin{split}
\cI_\phi^m(y) =&  \sum_{k, \tilde{k} \in \mathbb{Z}^n}  \phi \left(\frac{k}{m}, \frac{\tilde{k}}{m}\right) p_{k,m} y p_{\tilde{k},m} =
 \sum_{k, \tilde{k} \in \mathbb{Z}^n, (\frac{k}{m}, \frac{\tilde{k}}{m}) \not \in U_{l+1}}  \phi\left(\frac{k}{m}, \frac{\tilde{k}}{m}\right) p_{k,m} y p_{\tilde{k},m} \\
= &  \sum_{k, \tilde{k} \in \mathbb{Z}^n, (\frac{k}{m}, \frac{\tilde{k}}{m}) \not \in U_{l+1}}  \phi_0\left(\frac{k}{m}, \frac{\tilde{k}}{m}\right) p_{k,m} y p_{\tilde{k},m}
=\sum_{k, \tilde{k} \in \mathbb{Z}^n}  \phi_0\left(\frac{k}{m}, \frac{\tilde{k}}{m}\right)p_{k,m} y p_{\tilde{k},m}
=   \cI_{\phi_0}^m (y).
\end{split}
\end{equation}

Let $\hat{\phi_0}$ be the Fourier transform of $\phi_0$. Then,
\[
\phi_0(\xi, \tilde{\xi}) = \int_{\mathbb{R}^n} \int_{\mathbb{R}^n} \hat{\phi}_0 (s, \tilde{s}) e^{is \cdot \xi} e^{i \tilde{s} \cdot \tilde{\xi}} ds d\tilde{s}.
\]
This implies that
\[
 \cI_{\phi_0}(y)  =  \int_{\mathbb{R}^n} \int_{\mathbb{R}^n}  \hat{\phi}_0(s, \tilde{s}) e^{i s\cdot \mathcal{A}  } y  e^{i \tilde{s}\cdot \mathcal{A}} ds d\tilde{s}.
\]
Let $A^m_j = \int_{\mathbb{R}^n} \xi_j dE_m(\xi)$ and set $\cA_m = (A^m_1, \ldots, A^m_n)$.
Then,
\begin{equation}\label{EqnOperatorIntegral}
\begin{split}
\cI_{\phi_0}(y) - \cI_{\phi_0}^m(y) = &\int_{\mathbb{R}^n} \int_{\mathbb{R}^n} \hat{\phi}_0(s, \tilde{s})( e^{i s \cdot \cA} y e^{i\tilde{s}\cdot \cA} - e^{i s \cdot \cA_m} y e^{i\tilde{s}\cdot \cA_m})ds d\tilde{s} \\ = &
\int_{\mathbb{R}^n} \int_{\mathbb{R}^n} \hat{\phi}_0(s, \tilde{s})( e^{is \cdot \cA} y (e^{i \tilde{s} \cdot \cA} - e^{i \tilde{s} \cdot \cA_m}) + (e^{i s \cdot \cA} - e^{is \cdot \cA_m}) y e^{i \tilde{s} \cdot \cA_m})ds d\tilde{s}
\end{split}
\end{equation}
We find the following estimates as $m \rightarrow \infty$, by respectively (\ref{EqnFirstZero}) and (\ref{EqnSecondZero}), then applying (\ref{EqnOperatorIntegral}) and Lemma \ref{LemDuhamel} and finally using that $\Vert A^m_j - A_j \Vert \leq \frac{1}{m}$,
\[
\begin{split}
\Vert \cI_{\phi}(y) - \cI_{\phi}^m(y)\Vert_2 = & \Vert \cI_{\phi_0}(y) - \cI_{\phi_0}^m(y) \Vert_2 \\
\leq & 2 \int_{\mathbb{R}^n} \int_{\mathbb{R}^n} \vert \hat{\phi}_0(s, \tilde{s})\vert  \sum_{j=1}^n \vert s_j \vert   \Vert A^m_j - A_j \Vert  \Vert y \Vert_2 ds d\tilde{s}\\
 \leq &
\frac{2}{m} \Vert y \Vert_2  \int_{\mathbb{R}^n} \int_{\mathbb{R}^n}  \vert \hat{\phi}_0(s, \tilde{s}) \vert\sum_{j=1}^n \vert s_j \vert   ds d\tilde{s}.
\end{split}
\]
Since $\phi_0$ is a Schwartz function, also the Fourier transform $\hat{\phi}_0$ is a Schwartz function. So the latter expression converges to 0.
\end{proof}

\begin{prop}[see Lemma 9 of \cite{PotSuk12}]\label{PropGausssian}
Let $\nu$ be a finite measure on $\mathbb{R}^n \times \mathbb{R}^n$. Let $\phi \in L^1(\mathbb{R}^n \times \mathbb{R}^n, \nu)$. Define,
\[
\phi_k(\xi, \tilde{\xi}) = \left( \sqrt{ \frac{k}{\pi}} \right)^n \int_{\mathbb{R}^n} e^{-k \eta \cdot \eta} \phi(\xi - \eta, \tilde{\xi} - \eta) d\eta.
\]
Then, $\Vert \phi_k - \phi \Vert_1 \rightarrow 0$ as $k \rightarrow \infty$.
\end{prop}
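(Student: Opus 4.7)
The kernel $g_k(\eta) := (k/\pi)^{n/2} e^{-k\,\eta\cdot\eta}$ is the standard Gaussian probability density on $\mathbb{R}^n$: a nonnegative $L^1$ function with $\int g_k\,d\eta = 1$ whose mass concentrates at the origin, in the sense that $\int_{|\eta|\ge\delta} g_k\,d\eta \to 0$ as $k\to\infty$ for every $\delta > 0$. Writing $T_\eta\phi(\xi,\tilde\xi) := \phi(\xi-\eta,\tilde\xi-\eta)$ for the diagonal translation, the map $\phi \mapsto \phi_k$ is convolution of $\phi$ with $g_k$ along the diagonal direction $\eta\mapsto(\eta,\eta)$ of $\mathbb{R}^{2n}$, and the statement is the standard $L^1$ approximate identity property adapted to this setting.

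Using $\int g_k = 1$, I would first write
\[
(\phi_k - \phi)(\xi,\tilde\xi) = \int_{\mathbb{R}^n} g_k(\eta)\bigl(T_\eta\phi - \phi\bigr)(\xi,\tilde\xi)\,d\eta,
\]
take absolute values, integrate against $\nu$, and apply Fubini (justified by the positivity of $g_k$) to obtain
\[
\|\phi_k - \phi\|_{L^1(\nu)} \leq \int_{\mathbb{R}^n} g_k(\eta)\,J(\eta)\,d\eta, \qquad J(\eta) := \|T_\eta\phi - \phi\|_{L^1(\nu)}.
\]
Once I establish that $J$ is uniformly bounded and $J(\eta) \to 0$ as $\eta\to 0$, the usual splitting $\int_{|\eta|<\delta} + \int_{|\eta|\ge\delta}$ together with $\int g_k = 1$ and $\int_{|\eta|\ge\delta} g_k \to 0$ closes the argument.

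To show $J(\eta)\to 0$, I would approximate $\phi$ in $L^1(\nu)$ by a continuous compactly supported $\psi$ (available by regularity of the finite Borel measure $\nu$) with $\|\phi-\psi\|_{L^1(\nu)} < \varepsilon$. Uniform continuity of $\psi$ gives $\|T_\eta\psi - \psi\|_\infty \to 0$ as $\eta\to 0$, which upgrades to $L^1(\nu)$ convergence by finiteness of $\nu$. The main obstacle is the cross-term $\|T_\eta(\phi-\psi)\|_{L^1(\nu)}$ in the triangle decomposition
\[
J(\eta) \leq \|T_\eta(\phi-\psi)\|_{L^1(\nu)} + \|T_\eta\psi - \psi\|_{L^1(\nu)} + \|\phi-\psi\|_{L^1(\nu)},
\]
because $\nu$ need not be translation invariant, so one cannot cheaply bound $\|T_\eta(\phi-\psi)\|_{L^1(\nu)}$ by $\|\phi-\psi\|_{L^1(\nu)}$. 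I would resolve this by Fubini-swapping back: the aggregated contribution $\int g_k(\eta)\,\|T_\eta(\phi-\psi)\|_{L^1(\nu)}\,d\eta$ equals $\int|\phi-\psi|\,d\mu_k$ with the averaged measure $\mu_k := \int g_k(\eta)(T_{-\eta})_*\nu\,d\eta$, which has the same total mass as $\nu$ and converges weakly to $\nu$ as $k\to\infty$. Securing a $k$-uniform absorption $\int|h|\,d\mu_k \lesssim \|h\|_{L^1(\nu)}$ for $h\in L^1(\nu)$ is the only delicate point; everything else reduces to standard manipulations with the Gaussian kernel and the finiteness of $\nu$.
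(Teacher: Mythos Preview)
Your instinct about the translation cross-term is correct, but the resolution you sketch does not work: the uniform bound $\int |h|\,d\mu_k \lesssim \|h\|_{L^1(\nu)}$ is false for general finite $\nu$. Take $\nu = \delta_{(0,0)}$ a point mass and $h$ vanishing at the origin but not Lebesgue-a.e.\ along the diagonal; then $\|h\|_{L^1(\nu)} = 0$ while $\int |h|\,d\mu_k = \int g_k(\eta)\,|h(-\eta,-\eta)|\,d\eta > 0$. The deeper issue is that $\phi \mapsto \phi_k$ is not even well-defined on $L^1(\nu)$-equivalence classes when $\nu$ is singular with respect to Lebesgue measure, so any density argument must track a genuine pointwise representative, and there is no reason $L^1(\nu)$-approximation should control the diagonal averaging.

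The paper's route is different and shorter: it reduces by density to bounded uniformly continuous $\phi$ (the paper writes ``absolutely continuous'' but then uses the uniform-continuity $\epsilon$--$\delta$ property), and for such $\phi$ proves the stronger statement $\|\phi_k - \phi\|_\infty \to 0$ by the same near/far split $\int_{\|\eta\|<\delta} + \int_{\|\eta\|\geq\delta}$, using uniform continuity on the near piece and $\|\phi\|_\infty$ on the far piece; finiteness of $\nu$ then upgrades $L^\infty$ to $L^1$. The paper's density reduction is asserted without justification and in fact runs into exactly the obstruction you flagged, so the proposition as literally stated is loose. What saves matters is that in the sole application (Assumption~2 in the proof of Theorem~\ref{ThmIntegralEstimate}) the function $\phi = \phi_f$ is already bounded with $|\phi_{f_k}|\leq 1$ and the measure has compact support, so the $L^\infty$ argument (or simply dominated convergence) applies directly and no density step is actually needed.
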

\begin{proof}
Since the bounded absolutely continuous functions are dense in $ L^1(\mathbb{R}^n \times \mathbb{R}^n, \nu)$ we may assume that $\phi$ is bounded and absolutely continuous. Let $\epsilon >0$ and choose $\delta > 0$ such that for every $(\xi, \tilde{\xi}),(\eta, \tilde{\eta})  \in \mathbb{R}^n \times \mathbb{R}^n$ with $\Vert (\xi, \tilde{\xi}) - (\eta, \tilde{\eta}) \Vert_2 < \delta$, we have  $\vert \phi(\xi, \tilde{\xi}) - \phi(\eta, \tilde{\eta}) \vert < \epsilon$. Then,
\[
\begin{split}
\Vert \phi_k - \phi \Vert_\infty = & \sup_{(\xi, \tilde{\xi}) } \left(\sqrt{ \frac{k}{\pi} } \right)^n \left|
\int_{\eta \in \mathbb{R}^n, \Vert \eta \Vert_2 \geq \delta} e^{-k \eta \cdot \eta} (\phi(\xi - \eta, \tilde{\xi} - \eta) - \phi(\xi, \tilde{\xi})) d\eta \right. \\ & +\left.
\int_{\eta \in \mathbb{R}^n, \Vert \eta \Vert_2 < \delta} e^{-k \eta \cdot \eta} (\phi(\xi - \eta, \tilde{\xi} - \eta) - \phi(\xi, \tilde{\xi})) d\eta
 \right| \\
\leq &    2\Vert \phi \Vert_\infty \left(\sqrt{ \frac{k}{\pi} } \right)^n
\int_{\eta \in \mathbb{R}^n, \Vert \eta \Vert_2 \geq \delta} e^{-k \eta \cdot \eta} d\eta   + \epsilon.
\end{split}
\]
The latter expression converges to 0 as $k \rightarrow \infty$. Since $\nu$ is finite, this implies that $\Vert \phi_k - \phi \Vert_1 \rightarrow 0$.
\end{proof}

\section{Commutator estimates}\label{SectCommutatorEstimate}

This section contains the main result of this paper. We prove that the best constant for operator Lipschitz inequalities and commutator estimates in Schatten-von Neumann classes are of order $\frac{p^2}{p-1}$.

In this section, $\xi, \tilde{\xi}$ are vectors in $\mathbb{R}^n$.
Suppose that $f: \mathbb{R}^n \rightarrow \mathbb{R}$ is a Lipschitz function. We define $\phi_f, \psi_f$ and $\phi_j, \psi_j, 1 \leq j \leq n$ on $\mathbb{R}^n \times \mathbb{R}^n$ by
\begin{equation}\label{EqnAuxFunctions}
\begin{split}
\psi_f(\xi, \tilde{\xi}) = f(\xi) - f(\tilde{\xi}),
&\qquad
\phi_f(\xi, \tilde{\xi}) =
\left\{
\begin{array}{ll}
 \frac{f(\xi) - f(\tilde{\xi})}{\Vert \xi - \tilde{\xi}\Vert_2} & \textrm{ if } \xi \not = \tilde{\xi}, \\
0 & \textrm{ if } \xi = \tilde{\xi},
\end{array}
\right. \\
 \psi_j(\xi, \tilde{\xi}) = \xi_j - \tilde{\xi}_j,
& \qquad
 \phi_j(\xi, \tilde{\xi}) = \left\{
\begin{array}{ll}
\frac{\xi_j - \tilde{\xi}_j}{\Vert \xi - \tilde{\xi}\Vert_2} & \textrm{ if } \xi \not = \tilde{\xi},\\
0 & \textrm{ if } \xi = \tilde{\xi}.
\end{array}
\right.
\end{split}
\end{equation}
Let $E$ be a spectral measure on $\mathbb{R}^n$ with compact support. Since the functions defined in (\ref{EqnAuxFunctions}) are all bounded on the support of $E$, the double operator integrals $\cI_{\phi_f}, \cI_{\phi_j}, \cI_{\psi_f}, \cI_{\psi_j}$ with respect to $E$ exist as bounded operators on $\cS_2$.

\begin{thm}\label{ThmIntegralEstimate}
Let $p \in (1, \infty)$. Let $f: \mathbb{R}^n \rightarrow \mathbb{R}$ be a Lipschitz function with $\Vert f \Vert_{{\rm Lip}} \leq 1$.  Let $y \in \cS_p \cap \cS_2$ be off-diagonal. For every $1 \leq j \leq n$, we have
\begin{equation}\label{EqnNormEstimate}
\Vert \cI_{\phi_f} \cI_{\phi_j}(y) \Vert_p \leq \frac{Cp^2}{p-1} \Vert y \Vert_p,
\end{equation}
for a constant $C$ that is independent of $p$, the spectral measure $E$ and the Lipschitz function $f$.
\end{thm}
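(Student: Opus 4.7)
The proof rests on a structural identity that is the whole reason Lemma~\ref{LemMultiplierEven} was engineered the way it was. For $\xi \neq \tilde{\xi}$ in $\mathbb{R}^n$, the hypothesis $\Vert f\Vert_{\rm Lip}\leq 1$---measured in the $\ell^1$ metric---forces $|f(\xi)-f(\tilde{\xi})| \leq \Vert \xi - \tilde{\xi}\Vert_1$, so the pair $(\xi - \tilde{\xi},\, f(\xi) - f(\tilde{\xi}))$ lies in the region where formula \eqref{EqnEvenMultiplier} applies, giving
\[
\phi_f(\xi,\tilde{\xi})\,\phi_j(\xi,\tilde{\xi}) \;=\; m_j\bigl(\xi-\tilde{\xi},\, f(\xi)-f(\tilde{\xi})\bigr).
\]
Combined with the semigroup identity $\cI_{\phi_f}\cI_{\phi_j} = \cI_{\phi_f \phi_j}$, the problem reduces to bounding $\cI_{m_j \circ \mathrm{diff}}(y)$, now viewed as a double operator integral with respect to the joint spectral measure $\tilde{E}$ of the commuting $(n{+}1)$-tuple $\tilde{\cA} := (A_1,\dots,A_n, f(\cA))$, where $\mathrm{diff}(\tilde{\xi},\tilde{\tilde{\xi}}) := \tilde{\xi}-\tilde{\tilde{\xi}}$ in $\mathbb{R}^{n+1}$. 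Such a ``difference symbol'' is precisely what transference is built for: formally, via Fourier inversion,
\[
\cI_{m_j \circ \mathrm{diff}}(y) \;=\; \int_{\mathbb{R}^{n+1}} \widehat{m_j}(s)\, e^{is\cdot \tilde{\cA}} y \, e^{-is\cdot \tilde{\cA}}\,ds,
\]
and since $U(s)y := e^{is\cdot \tilde{\cA}} y\, e^{-is\cdot \tilde{\cA}}$ is a strongly continuous $(n{+}1)$-parameter group of isometries on $\cS_p$, a Coifman--Weiss-type transference principle bounds $\Vert \cI_{m_j\circ\mathrm{diff}}\Vert_{\cS_p\to\cS_p}$ by $\Vert T_{m_j}\Vert_{\cL^p_{\cS_p}(\mathbb{R}^{n+1})}$, which by Theorems~\ref{ThmUMDEstimate} and~\ref{ThmUMDConstantSp} is $\leq C \cdot \UMD_p(\cS_p) \sim p^2/(p-1)$.

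To make this rigorous, since $\widehat{m_j}$ is only a tempered distribution ($m_j$ is bounded but discontinuous at the origin), I would work through a dual pairing with $z \in \cS_{p'}\cap \cS_2$ and approximate in three layers, using Section~\ref{SectDoubleOpInt}. First, strengthen the off-diagonal hypothesis on $y$: approximating by $y_l := \cI_{\chi_{U_l^c}}(y) \to y$ in $\cS_p$ provides cutoffs with $\int_{U_l} dF(\xi,\tilde{\xi})(y_l) = 0$, making the values of $m_j$ near the origin irrelevant to the pairing. Second, Gaussian convolution (Proposition~\ref{PropGausssian}) produces a Schwartz approximation $m_j^{(\kappa)}$ that agrees with $m_j$ outside a small neighborhood of $0$ where, after the first step, $y_l$ has no spectral mass, so no error is introduced in $\tau(z\,\cI_{m_j^{(\kappa)}\circ\mathrm{diff}}(y_l))$. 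Third, discretize $E$ to the lattice measure $E_m$ via Proposition~\ref{PropDoubleOpIntError}, reducing the estimate to a finite-rank Schur-multiplier problem associated with $\tilde{\cA}_m$; Theorem~\ref{ThmDiscrete} (applied to a suitable auxiliary odd multiplier after symmetrization) transfers the Schatten-norm bound from the continuous multiplier $T_{m_j^{(\kappa)}}$. Passing to the successive limits $\kappa, m, l \to \infty$ in the pairing $\tau(z\,\cdot)$ then delivers \eqref{EqnNormEstimate}.

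The step I expect to be the most delicate is the discrete transference itself. The first $n$ coordinates of the joint spectrum of $\tilde{\cA}_m$ lie on $\frac{1}{m}\mathbb{Z}^n$, but the last coordinate $f(k/m)$ is not in general on a lattice, so a joint lattice in $\mathbb{R}^{n+1}$ must be assembled by rounding $f|_{\frac{1}{m}\mathbb{Z}^n}$ to a finer scale with a uniformly controlled error. Moreover, Theorem~\ref{ThmDiscrete} is stated for odd multipliers, while $m_j$ is even; one must therefore either symmetrize $m_j^{(\kappa)}$ as a sum of products of odd factors (in the spirit of the construction of Lemma~\ref{LemFirstMultiplier}), or else bypass the discrete torus entirely and invoke Coifman--Weiss transference on $\mathbb{R}^{n+1}$ directly, checking that the even/homogeneous/smooth structure yields a uniform bound $\Vert T_{m_j^{(\kappa)}}\Vert \leq C \cdot \UMD_p(\cS_p)$ independent of $\kappa$.
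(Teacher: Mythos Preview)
Your outline is essentially the paper's own proof: the same structural identity $\phi_f\phi_j(\xi,\tilde\xi)=m_j(\xi-\tilde\xi,f(\xi)-f(\tilde\xi))$, the same three reductions (cut away a neighbourhood of the diagonal, discretize $E$ to $\frac{1}{m}\mathbb{Z}^n$, round $f$ onto a finer lattice), and the same endgame via Theorems~\ref{ThmDiscrete}, \ref{ThmUMDEstimate} and~\ref{ThmUMDConstantSp}. You also correctly anticipate the two delicate points the paper has to treat.

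One layer, however, is miscast. Proposition~\ref{PropGausssian} does not produce an approximant that \emph{agrees} with the original outside a neighbourhood of the origin---Gaussian convolution smears everywhere---and smoothing $m_j$ on $\mathbb{R}^{n+1}$ does nothing to make the composite $(\xi,\tilde\xi)\mapsto m_j(\xi-\tilde\xi,f(\xi)-f(\tilde\xi))$ Schwartz on $\mathbb{R}^n\times\mathbb{R}^n$ when $f$ is merely Lipschitz, which is what Proposition~\ref{PropDoubleOpIntError} requires. The paper smooths $f$ instead: first truncate $f$ to compact support, then set $f_k=G_k\ast f$ (a Schwartz function with $\Vert f_k\Vert_{\rm Lip}\le 1$), and use Proposition~\ref{PropGausssian} to pass $\tau(z\,\cI_{\phi_{f_k}}\cI_{\phi_j}y)\to\tau(z\,\cI_{\phi_f}\cI_{\phi_j}y)$. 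With $f$ Schwartz, $\phi_f\phi_j$ coincides with a genuine Schwartz function off $U_{l+1}$, and Proposition~\ref{PropDoubleOpIntError} then justifies the discretization of $E$. Two minor points: the claim $\cI_{\chi_{U_l^c}}(y)\to y$ in $\cS_p$ is unproved (those Schur projections are not a priori uniformly $\cS_p$-bounded), so stay with the weak pairing $\tau(z\,\cdot)$ as you already indicate; and on parity, the paper applies Theorem~\ref{ThmDiscrete} to the even $m_j$ without further ado---the ``odd'' hypothesis as written is narrower than the cited source results, which cover smooth homogeneous symbols with the regulated value at~$0$, so no symmetrization trick is needed.
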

\begin{proof}
 In order to prove the theorem, we first make three assumptions on $y$, $E$ and $f$. We show that each assumption can be made without loss of generality. Firstly, note that we assumed that $y$ is off-diagonal. The next assumption shows that we may in fact assume that   $y$ has no non-trivial part in a specific open neighbourhood of the diagonal.

\vspace{0.3cm}

{\bf Assumption 1.} For $l \in \mathbb{N}^\ast$, let $U_l = \{ (\xi, \tilde{\xi}) \in \mathbb{R}^n \times \mathbb{R}^n \mid \Vert \xi - \tilde{\xi} \Vert_2 < \frac{1}{l} \}$. It suffices to prove (\ref{EqnNormEstimate}) for $y \in \cS_p \cap \cS_2$ for which there exists a $l \in \mathbb{N}$  such that $\int_{U_l} dF(\xi, \tilde{\xi})(y) = 0$.

\vspace{0.3cm}

Let $y \in \cS_p \cap \cS_2$ be off-diagonal. Let
\[
\varphi_l(\xi, \tilde{\xi}) = \left\{
\begin{array}{ll}
1 & {\rm if } \: \Vert \xi - \tilde{\xi} \Vert_2 > \frac{1}{l}, \\
0 &  {\rm if } \: \Vert \xi - \tilde{\xi} \Vert_2 \leq \frac{1}{l},
\end{array}
\right.  \qquad
\varphi_\infty(\xi, \tilde{\xi}) = \left\{
\begin{array}{ll}
1 & {\rm if } \:   \xi \not = \tilde{\xi},\\
0 &  {\rm if } \: \xi = \tilde{\xi}.
\end{array}
\right.
\]
Then, $\int_{U_l} dF(\xi, \tilde{\xi})(\cI_{\varphi_l}(y)) = 0$. Furthermore, using respectively the definition of $\cI_{\varphi_l}$, the Lebesgue dominated convergence theorem and the fact that $y$ is off-diagonal, we find for every $z \in \cS_2$, as $l \rightarrow \infty$,
\begin{equation}\label{EqnLebesgue}
\begin{split}
 \tau ( z  \cI_{\phi_f} \cI_{\phi_j} \cI_{\varphi_l} y ) = & \int_{\mathbb{R}^n} \int_{\mathbb{R}^n} \phi_f \phi_j \varphi_l(\xi, \tilde{\xi}) d\nu_{z,y}(\xi, \tilde{\xi}) \\ \rightarrow & \int_{\mathbb{R}^n} \int_{\mathbb{R}^n}  \phi_f \phi_j \varphi_\infty(\xi, \tilde{\xi}) d\nu_{z,y}(\xi, \tilde{\xi}) \\ = & \tau(z\cI_{\phi_f} \cI_{\phi_j} y).
\end{split}
\end{equation}
Suppose that we have proved the  (\ref{EqnNormEstimate}) with $y$ replaced by $\cI_{\varphi_l}(y)$, in particular for a constant $C$ that is independent of $l$. Then, it follows from (\ref{EqnLebesgue}) that also (\ref{EqnNormEstimate}) holds for $y$. In all, this shows that we can make Assumption 1.

\vspace{0.3cm}

{\bf Assumption 2.} Suppose that $y \in \cS_p \cap \cS_2$ satisfies Assumption 1.  It suffices to prove Theorem \ref{ThmIntegralEstimate} under the condition that $E$ is a discrete spectral measure on $\mathbb{R}^n$ with support contained in $\frac{1}{m}\mathbb{Z}^n$ for some $m \in \mathbb{N}$.

\vspace{0.3cm}

We show that indeed Assumption 2 suffices to prove  Theorem \ref{ThmIntegralEstimate}.
 Let $f$ be an arbitrary Lipschitz function with $\Vert f \Vert_{{\rm Lip}} \leq 1$. Let $f_l, l \in \mathbb{N}$ be a sequence of Lipschitz functions with $\Vert f_l \Vert_{{\rm Lip}} \leq 1$, such that $f_l(\xi, \tilde{\xi}) = f(\xi, \tilde{\xi})$ for every $(\xi, \tilde{\xi}) \in [-l, l]^n$ and such that $f_l$ has compact support. Suppose that we have proved Theorem \ref{ThmIntegralEstimate} for all $f_l$.

Note that for every $\xi, \tilde{\xi} \in \mathbb{R}^n$ we have $\vert \phi_{f_l}(\xi, \tilde{\xi})  \phi_j(\xi, \tilde{\xi}) \vert \leq 1$ and $\phi_{f_l} \rightarrow \phi_f$ pointwise. The Lebesgue dominated convergence theorem hence entails that for every $z \in \cS_2$ we have, as $l \rightarrow \infty$,
\[
\begin{split}
\tau(z \cI_{\phi_{f_l}} \cI_{\phi_j} y) = & \int_{\mathbb{R}^n} \int_{\mathbb{R}^n} \phi_{f_l} (\xi, \tilde{\xi}) \phi_j(\xi, \tilde{\xi}) dv_{z,y}(\xi, \tilde{\xi}) \\
\rightarrow &  \int_{\mathbb{R}^n} \int_{\mathbb{R}^n} \phi_{f} (\xi, \tilde{\xi}) \phi_j(\xi, \tilde{\xi}) dv_{z,y}(\xi, \tilde{\xi}) \\
= & \tau(z\cI_{\phi_{f_l}} \cI_{\phi_j} y).
\end{split}
\]
From this limit, it follows that Theorem \ref{ThmIntegralEstimate} also holds for $f$. Hence, we may assume that $f$ has compact support.

 Let
\[
G_k(\xi) = \left(\sqrt{\frac{k}{\pi}}\right)^n  e^{-k (\xi \cdot \xi)},
\]
be a dilated Gaussian and put $f_k = G_k \ast f$. By Proposition \ref{PropGausssian},
\begin{equation}\label{EqnSchwartzApprox}
\begin{split}
\tau(z \cI_{\phi_{f_k}}   \cI_{\phi_j}  y) = & \int_{\mathbb{R}^n} \int_{\mathbb{R}^n} \phi_{f_k}(\xi, \tilde{\xi}) d\nu_{z,   \cI_{\phi_j}  y}(\xi, \tilde{\xi})  \\ \rightarrow  & \int_{\mathbb{R}^n} \int_{\mathbb{R}^n} \phi_{f}(\xi, \tilde{\xi}) d\nu_{z,   \cI_{\phi_j} y}(\xi, \tilde{\xi})   =  \tau(z \cI_{\phi_f}   \cI_{\phi_j}  y ), \qquad \textrm{ for every } \:  z \in \cS_{p'} \cap \cS_{2}.
\end{split}
\end{equation}
The function $f_k$ is Schwartz since $f$ has compact support. Furthermore, $\Vert f_k \Vert_{{\rm Lip}} \leq 1$. Suppose that Theorem \ref{ThmIntegralEstimate} is proved for all $f_k$, in particular with $C$ independent of $k$. Then, (\ref{EqnSchwartzApprox}) implies that Theorem \ref{ThmIntegralEstimate}  also holds for $f$. In all, this proves that we may assume that $f$ is a Schwartz function.

Now, assume that $f$ is a Schwartz function. Let $E$ be a spectal measure on $\mathbb{R}^n$. We define discretized spectral measures by setting
\[
E_m(\Omega) = \sum_{k \in \mathbb{Z}^n, {\rm s.t.} \frac{k}{m} \in \Omega} E\left(\left[ \frac{k_1}{m}, \frac{k_1+1}{m} \right) \times \ldots \times   \left[ \frac{k_n}{m}, \frac{k_n+1}{m} \right)\right), \qquad \Omega \subseteq \mathbb{R}^n \textrm{ a Borel set.}
\]
Let $\cI_{\phi_f}^m$ and $\cI_{\phi_j}^m$ be the  double operator integrals of $\phi_f$ and respectively $\phi_j$ with respect to the spectral measure $E_m$.   Let $U_l, l \in \mathbb{N}^\ast$ be the open neighbourhood of Assumption 1 for $y$. Since $f$ is Schwartz, there is a Schwartz function $\phi_{f,0}$ on $\mathbb{R}^n \times \mathbb{R}^n$ such that $\phi_{f,0}(\xi, \tilde{\xi}) = \phi_f(\xi, \tilde{\xi})$ for every $(\xi, \tilde{\xi}) \in \mathbb{R}^n \times \mathbb{R}^n \backslash U_{l+1}$.  It follows from Proposition \ref{PropDoubleOpIntError}    that
\begin{equation}\label{EqnLimitNot}
 \lim_{m \rightarrow \infty} \tau(z \cI^m_{\phi} \cI^m_{\phi_j} y) = \tau(z \cI_{\phi} \cI_{\phi_j} y), \qquad \textrm{ for every } z \in \cS_{p'} \cap \cS_{2}.
\end{equation}
Suppose that we have proved (\ref{EqnNormEstimate}) for $\cI^m_{\phi_f}, \cI^m_{\phi_j}$ and $y$ as in Assumption 1. In particular, the sequence in $m$ given by $\cI_{\phi_f}^m\cI_{\phi_j}^m y$ is bounded in $\cS_p$. Then, it follows from (\ref{EqnLimitNot}) that also (\ref{EqnNormEstimate}) holds for $\cI_{\phi_f}\cI_{\phi_j} y$. In all, this proves that without loss of generality we can make Assumption 2.

\vspace{0.3cm}

{\bf Assumption 3.} Let $y$ be as in Assumption 1 and let $E$ be a spectral measure as in Assumption 2. So the support of $E$ is contained in $\frac{1}{m} \mathbb{Z}^n$.  It suffices to prove Theorem \ref{ThmIntegralEstimate} under the condition that $f$ is a Lipschitz function with $\Vert f \Vert_{{\rm Lip}}\leq 1$ and such that there exists a $N \in \mathbb{N}$ such that
 $f$ maps $\frac{1}{m}\mathbb{Z}^n$  to $\frac{1}{mN}\mathbb{Z}$.

\vspace{0.3cm}

We prove that Assumption 3 is sufficient to conclude Theorem \ref{ThmIntegralEstimate}. Let $B_m: \mathbb{R}^n \rightarrow \mathbb{R}$ be a smooth function such that $B_m(0) = 1$ and the support of $B_m$ is contained in $[-\frac{1}{2m} , \frac{1}{2m} ]^n$.  Put
\[
f_{N}(\xi) =  f(\xi) + \sum_{k \in \mathbb{Z}^n}  \left( \frac{\lfloor N f(\frac{k}{m}) \rfloor}{N} - f\left(\frac{k}{m}\right) \right) B_m \left(\xi - \frac{k}{m}\right).
\]
Then, $f_{N}$ maps  $\frac{1}{m}\mathbb{Z}^n$  to $\frac{1}{mN}\mathbb{Z}$ and we have $\Vert f_N \Vert_{{\rm  Lip}} \leq 1 + \frac{1}{N} \Vert B_m \Vert_{\rm Lip}$ and $\Vert f - f_N \Vert_{{\rm Lip}} \leq \frac{1}{N} \Vert B_m \Vert_{{\rm Lip}}$. Moreover, for $z \in \cS_{p'} \cap \cS_{2}$,
\begin{equation}\label{EqnFunctionApprox}
\begin{split}
\tau(z (\cI_{\phi_{f_N}} - \cI_{\phi_{f}}) \cI_{\phi_j}y)
\leq &
\Vert (\phi_{f_N} - \phi_{f}) \phi_j  \Vert_\infty \Vert z \Vert_2 \Vert y \Vert_2
 \\ \leq &   \Vert f_N - f \Vert_{{\rm Lip}} \Vert \phi_j \Vert_\infty \Vert z \Vert_2 \Vert y \Vert_2,
\end{split}
\end{equation}
which converges to 0 as $N \rightarrow \infty$.
Suppose that (\ref{EqnOperatorIntegral}) is proved for all functions
$
g_N:= ( 1 + \frac{1}{N} \Vert B_m \Vert_{\rm Lip})^{-1} f_N,
$
(so that $\Vert g_N \Vert_{{\rm Lip}} \leq 1$).
 Then, in particular $\cI_{\phi_{f_N}} \cI_{\phi_j} y \in \cS_p$ is bounded in $N$. It follows from (\ref{EqnFunctionApprox}) that also $\cI_{\phi_{f}} \cI_{\phi_j}y$ is contained in  $\cS_p$ and satisfies the estimate (\ref{EqnNormEstimate}). In all, we conclude that without loss of generality, we can make Assumption 3.

\vspace{0.3cm}

We now prove Theorem \ref{ThmIntegralEstimate} under Assumptions 1, 2 and 3.
 For $k =(k_1, \ldots, k_n) \in \mathbb{Z}^n$, define the spectral projection
\[
p_k = E\left( \left[\frac{k_1}{m}, \frac{k_1+1}{m} \right) \times  \ldots \times \left[\frac{k_n}{m}, \frac{k_n+1}{m}\right) \right).
\]
For $\xi \in \mathbb{R}^n$ and $\mu \in \mathbb{R}$, define the unitary operator  acting $\cH$ by
\[
\begin{split}
u(\xi,  \mu) =&  \sum_{k \in \mathbb{Z}^n} e^{2\pi i mN( \frac{k}{m} \cdot \xi + f(\frac{k}{m})\mu)}p_k,
\end{split}
\]
For $y$ as in Assumption 1, put
\[
h_y =    u  \cdot y \cdot u^\ast.
\]
 Naturally, $h_y \in \cL^p_{\cS_p}(\mathbb{T}^{n+1})$.
 We consider $\mathbb{T}^{n+1}$ equipped with the normalized Lebesgue measure.  Then, $\Vert h_y \Vert_{\cL^p_{\cS_p}(\mathbb{T}^{n+1}) } = \Vert y \Vert_p$.

Fix $k, \tilde{k} \in  \mathbb{Z}^n$. Let $y \in p_k \cS_p p_{\tilde{k}}$ satisfy the condition of Assumption 1.
Since $\{ p_i\}_{i \in \mathbb{Z}^n}$ is a family of mutually orthogonal projections,
\begin{equation}\label{EqnHAX}
   h_{y}(\xi,  \mu) = e^{2\pi i mN( (\frac{k}{m} - \frac{\tilde{k}}{m})\cdot \xi  + (f(\frac{k}{m}) - f(\frac{\tilde{k}}{m}))\mu)}   y.
\end{equation}

Let $\delta_{ s, t}$ with $  s \in \mathbb{Z}^n, t \in \mathbb{Z}$ be the function on $\mathbb{Z}^{n+1}$ that attains the value 1 on $( s, t)$ and vanishes everywhere else. Taking the Fourier transform of $h_{y}$, we find
\[
\mathcal{F}_2\left( h_{y} \right) =  \delta_{N( k - \tilde{k}),   mN (f(\frac{k}{m}) -  f(\frac{\tilde{k}}{m}))} y \in  \cL^2_{\cS_p}(\mathbb{Z}^{ n+1}).
\]
 Using $\bar{m}_{j}$, the discretized version of   $m_{j}$, see  Theorem \ref{ThmDiscrete} and  Lemma \ref{LemMultiplierEven},
\begin{equation}\label{EqnHAXComputation}
\begin{split}
   T_{\bar{m}_{j}} h_{y} = & (\mathcal{F}_2^{-1} \circ \bar{m}_{j} \circ \mathcal{F}_2)  h_{y}\\
= & (\mathcal{F}_2^{-1} \circ \bar{m}_{j}) \left(
  \delta_{N( k - \tilde{k}) , mN (f(\frac{k}{m}) -  f(\frac{\tilde{k}}{m}))} y\right) \\
=  & \frac{f(\frac{k}{m}) - f(\frac{\tilde{k}}{m}) }{\Vert\frac{k}{m} - \frac{\tilde{k}}{m}\Vert_2} \frac{\frac{k_j}{m} - \frac{\tilde{k}_j}{m}}{ \Vert\frac{k}{m} - \frac{\tilde{k}}{m}\Vert_2}  \mathcal{F}_2^{-1} \left(
 \delta_{N( k - \tilde{k}),    mN (f(\frac{k}{m}) -  f(\frac{\tilde{k}}{m}))} y \right) \\
= &  \frac{f(\frac{k}{m}) - f(\frac{\tilde{k}}{m}) }{\Vert\frac{k}{m} - \frac{\tilde{k}}{m}\Vert_2} \frac{\frac{k_j}{m} - \frac{\tilde{k}_j}{m}}{ \Vert\frac{k}{m} - \frac{\tilde{k}}{m}\Vert_2}   h_{y}.
\end{split}
\end{equation}
On the other hand, recalling that $y \in p_k \cS_p p_{\tilde{k}}$,
\begin{equation}\label{EqnHAX2Computation}
\cI_\phi \cI_{\phi_j} y =  \frac{f(\frac{k}{m}) - f(\frac{\tilde{k}}{m}) }{\Vert\frac{k}{m} - \frac{\tilde{k}}{m}\Vert_2} \frac{\frac{k_j}{m} - \frac{\tilde{k}_j}{m}}{ \Vert\frac{k}{m} - \frac{\tilde{k}}{m}\Vert_2}  y.
\end{equation}
It follows from (\ref{EqnHAX}), (\ref{EqnHAXComputation}) and (\ref{EqnHAX2Computation}) that    for every    $y \in {\rm span}\left\{ p_k \cS_p p_{\tilde{k}}\mid k,\tilde{k} \in \frac{1}{m} \mathbb{Z}\right\}$ that satisfies Assumption 1,
\begin{equation}\label{EqnTMEquation}
T_{\bar{m}_{j}} h_{y} =   u \cdot  \cI_{\phi} \cI_{\phi_j} y \cdot  u^\ast.
\end{equation}
In particular,  $\cI_{\phi_f} \cI_{\phi_j} y\in \cS_p$ for all $1 \leq j \leq n$. Taking the norm in $\cL^p_{\cS_p}(\mathbb{T}^{n+1})$ on both sides of (\ref{EqnTMEquation}), one obtains the inequality
\[
\begin{split}
 \Vert \cI_{\phi_f} \cI_{\phi_j} y\Vert_p
= &
\Vert  u \cdot   \cI_{\phi_f} \cI_{\phi_j} y \cdot  u^\ast \Vert_{\cL^p_{\cS_p}(\mathbb{T}^{ n+1})}
\\ = & \Vert T_{\bar{m}_{j}} h_{y}  \Vert_{\cL^p_{\cS_p}(\mathbb{T}^{ n+1})} \\
\leq &
 \Vert T_{\bar{m}_{j}}: \cL^p_{\cS_p}(\mathbb{T}^{ n+1})   \rightarrow \cL^p_{\cS_p}(\mathbb{T}^{ n+1})  \Vert \Vert h_{y} \Vert_{\cL^p_{\cS_p}(\mathbb{T}^{ n+1})}
\\ = &
\Vert T_{\bar{m}_{j}}: \cL^p_{\cS_p}(\mathbb{T}^{ n+1}) \rightarrow \cL^p_{\cS_p}(\mathbb{T}^{ n+1})  \Vert \Vert y \Vert_p
\end{split}
\]
Using respectively Theorem \ref{ThmDiscrete},  Theorem \ref{ThmUMDEstimate} and Theorem \ref{ThmUMDConstantSp}, we continue the inequality:
\[
\begin{split}
 \Vert \cI_{\phi_f} \cI_{\phi_j} y \Vert_p  \leq&
\Vert T_{\bar{m}_{j}}: \cL^p_{\cS_p}(\mathbb{T}^{ n+1}) \rightarrow \cL^p_{\cS_p}(\mathbb{T}^{ n+1})  \Vert \Vert  y \Vert_p \\
\leq &
\Vert T_{m_{j}}: \cL^p_{\cS_p}(\mathbb{R}^{ n+1}) \rightarrow \cL^p_{\cS_p}(\mathbb{R}^{ n+1})  \Vert \Vert y \Vert_p \\
\leq &
C_1 \: {\rm UMD}_p(\cS_p)   \Vert  y \Vert_p \\
\leq & \frac{C_2 \: p^2}{p-1}   \Vert y \Vert_p ,
\end{split}
\]
where  $C_1$ and $C_2$ are constants  that are independent of the Lipschitz function $f$, the spectral measure $E$ and $p\in (1, \infty)$. Since ${\rm span}\left\{ p_k \cS_p p_{\tilde{k}}\mid k,\tilde{k} \in \frac{1}{m} \mathbb{Z}\right\}$ is dense in $\cS_p$, this concludes  the theorem.
\end{proof}

\begin{thm}\label{ThmOperatorLipschitz}
Let $f: \mathbb{R}^n \rightarrow \mathbb{R}$ be a Lipschitz function with $\Vert f \Vert_{{\rm Lip}} \leq 1$. Let $E$ be a spectral measure on $\mathbb{R}^n$ and let $\mathcal{A} = (A_1, \ldots, A_n)$ be defined as in (\ref{EqnArowDefinition}). Let $p \in (1,\infty)$. Let $x \in B(\cH)$ be such that for all $1 \leq j \leq n$ we have $[A_j, x] \in \cS_p$.  Then, also $[f(\cA), x] \in \cS_p$. Moreover, there exists a constant $C$ that is independent of $p\in(1, \infty)$, the spectral measure $E$ and the Lipschitz function $f$ such that
\begin{equation}\label{EqnFinalComEst}
\Vert [ f(\cA), x ] \Vert_p \leq \frac{Cp^2}{p-1} \sum_{j=1}^n \Vert [A_j, x] \Vert_p.
\end{equation}
\end{thm}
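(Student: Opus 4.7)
The plan is to reduce the commutator estimate to Theorem \ref{ThmIntegralEstimate} via the pointwise algebraic identity
\[
\psi_f(\xi, \tilde{\xi}) = \sum_{j=1}^n \phi_f(\xi, \tilde{\xi})\,\phi_j(\xi, \tilde{\xi})\,\psi_j(\xi, \tilde{\xi}),
\]
which is immediate from the relation $\sum_{j=1}^n (\xi_j - \tilde{\xi}_j)^2 = \Vert \xi - \tilde{\xi}\Vert_2^2$ on the off-diagonal and reads $0=0$ on the diagonal.

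First I would treat the case $x \in \cS_2$. Because $F = E \otimes E$ is a projection-valued measure on the Hilbert space $\cS_2$ (one checks $F(A \times B)(y) = E(A)yE(B)$ and that $F(\Omega_1)F(\Omega_2) = F(\Omega_1 \cap \Omega_2)$), the map $\phi \mapsto \cI_\phi$ is a $*$-homomorphism from bounded Borel functions into $B(\cS_2)$, so $\cI_{\phi\psi} = \cI_\phi \cI_\psi$. Combined with the pointwise identity above and (\ref{EqnDoublOpCommutator}), this yields for $x \in \cS_2$:
\[
[f(\cA), x] = \cI_{\psi_f}(x) = \sum_{j=1}^n \cI_{\phi_f}\cI_{\phi_j}\cI_{\psi_j}(x) = \sum_{j=1}^n \cI_{\phi_f}\cI_{\phi_j}([A_j, x]).
\]
Each summand involves $[A_j, x] = \cI_{\psi_j}(x)$, which is off-diagonal because $\delta \cdot \psi_j \equiv 0$ forces $\cI_\delta \cI_{\psi_j} = 0$. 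Applying Theorem \ref{ThmIntegralEstimate} to $y = [A_j, x]$ termwise and summing in $j$ then produces (\ref{EqnFinalComEst}) in the case $x \in \cS_2$.

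Next I would lift to arbitrary $x \in B(\cH)$. Theorem \ref{ThmIntegralEstimate} shows $\cI_{\phi_f}\cI_{\phi_j}$ extends by density, with the same operator norm, from off-diagonal elements of $\cS_p \cap \cS_2$ to a bounded map on the off-diagonal subspace of $\cS_p$. Define $z := \sum_{j=1}^n \cI_{\phi_f}\cI_{\phi_j}([A_j, x]) \in \cS_p$ using this extension; by construction it satisfies the right-hand side of (\ref{EqnFinalComEst}). To identify $z$ with $[f(\cA), x]$, I would test against $w$ in the dense subset $\cS_{p'} \cap \cS_2 \subseteq \cS_{p'}$. Since $\phi_f \phi_j$ is symmetric (the product of two antisymmetric kernels), the pairing identity $\tau(w\,\cI_{\phi_f \phi_j}(u)) = \tau(\cI_{\phi_f \phi_j}(w)\,u)$ holds for $w, u \in \cS_2$ and extends by density to off-diagonal $u \in \cS_p$. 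Combining this with the $\cS_2$-case applied to $w$, together with the cyclicity identities $\tau(w[f(\cA), x]) = -\tau([f(\cA), w]\,x)$ and $\tau([A_j, w]\,x) = -\tau(w\,[A_j, x])$, matches $\tau(wz)$ with $\tau(w[f(\cA), x])$ for every such $w$, whence $z = [f(\cA), x] \in \cS_p$.

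The main obstacle I expect is precisely this final identification step: carefully establishing the duality pairing for the double operator integrals and tracking the signs under cyclicity of $\tau$ against an $x$ that need not lie in any Schatten class. The pointwise factorization, the multiplicativity of the functional calculus $\phi \mapsto \cI_\phi$ on $\cS_2$, and the off-diagonal verification are routine once the setup is in place.
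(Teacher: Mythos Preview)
Your reduction in the case $x\in\cS_2$ via the factorisation $\psi_f=\sum_j\phi_f\phi_j\psi_j$ and the application of Theorem~\ref{ThmIntegralEstimate} to each off-diagonal $[A_j,x]$ is exactly the paper's argument. The divergence is entirely in the passage to arbitrary $x\in B(\cH)$: the paper does not argue by duality but invokes the abstract extension lemma \cite[Lemma~5.1]{KPSS}, verifying its hypotheses for the derivations $S_j=[A_j,\,\cdot\,]$, $T=[f(\cA),\,\cdot\,]$ on $B(\cH)$ and the maps $R_j=\cI_{\phi_f}\cI_{\phi_j}$ on $\cS_p$, and then reading off $T(x)=\sum_jR_jS_j(x)$ directly.

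Your duality sketch has a real gap. For $w\in\cS_{p'}\cap\cS_2$ and $x$ merely bounded, the pairings $\tau(w[f(\cA),x])$ and $\tau([A_j,w]\,x)$ are not a priori defined: the products lie only in $\cS_{p'}$, not in $\cS_1$. Restricting to finite-rank $w$ legitimises those two cyclicity identities, but the chain must then compare $-\tau\bigl((\sum_j[A_j,v_j])\,x\bigr)$ with $\sum_j\tau(v_j[A_j,x])$, where $v_j=\cI_{\phi_f\phi_j}(w)\in\cS_{p'}\cap\cS_2$; their difference is $\tau\bigl(\sum_j[A_j,v_jx]\bigr)$, and only the \emph{sum} of these commutators is trace class (each $v_jx$ lies merely in $\cS_{p'}$), so the standard ``trace of a commutator vanishes'' argument does not apply term by term and cannot be invoked for the sum without further work. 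This is precisely the subtlety that the black-box lemma from \cite{KPSS} is designed to absorb. A smaller point: for general $x$ you also owe a reason why $[A_j,x]$ lies in the off-diagonal part of $\cS_p$ before you apply the extended $\cI_{\phi_f}\cI_{\phi_j}$ to it; your justification via $\delta\cdot\psi_j\equiv 0$ only covers $x\in\cS_2$.
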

\begin{proof}
First assume that $x \in \cS_p \cap \cS_2$. In that case,
\begin{equation}\label{EqnCommutatorIntegral}
[f(\cA), x] = \cI_{\psi_f}(x) = \sum_{j=1}^n \cI_{\phi_f} \cI_{\phi_j} \cI_{\psi_j}(x) =  \sum_{j=1}^n \cI_{\phi_f} \cI_{\phi_j} [A_j, x].
\end{equation}
Here, the first and third equality are an application of (\ref{EqnDoublOpCommutator}). The second equality is a consequence of the fact that $g \mapsto \cI_g$ is an algebra homomorphism from the bounded Borel functions on $\mathbb{R}^n\times \mathbb{R}^n$ to the bounded operators acting on $\cS_2$. By Theorem \ref{ThmIntegralEstimate} we have for all $1 \leq j \leq n$ that  $ \cI_{\phi_f} \cI_{\phi_j} [A_j, x] \in \cS_p$ and moreover,
\begin{equation}\label{EqnIntermediateStatement}
   \Vert \cI_{\phi_f} \cI_{\phi_j} [A_j, x] \Vert_p \leq \frac{C\: p^2}{p-1}  \Vert  [A_j, x] \Vert_p.
\end{equation}
for a constant $C$ which is independent of $p$, $E$ and the function $f$ with $\Vert f \Vert_{{\rm Lip}} \leq 1$. Clearly, (\ref{EqnCommutatorIntegral}) and (\ref{EqnIntermediateStatement}) imply Theorem \ref{ThmOperatorLipschitz}.

Now, let $\mathfrak{X} = B(\cH)$ and $\mathfrak{N} = \cS_p$. For $x \in \mathfrak{X}$, let  $S_j(x) = [A_j, x],T(x) = [f(\cA), x]$. For $x \in \mathfrak{N}$, let  $R_j(x) = \cI_{\phi_f} \cI_{\phi_j} (x)$.  We will show that \cite[Lemma 5.1]{KPSS} is applicable.  The proof is exactly the same as   the final part of the proof of \cite[Theorem 5.3]{KPSS}.  We sketch it here. Firstly, for $x \in \cap_{j=1}^n \ker(S_j)$ we have for all $1 \leq j \leq n$ that $[A_j, x]= 0$. Hence, $[f(\cA), x] = 0$ and condition (1) of \cite[Lemma 5.1]{KPSS} is satisfied. Secondly, as explaind in the proof of \cite[Theorem 5.3]{KPSS} for any self-adjoint $Z \in B(\cH)$ the mapping $T_Z: \mathfrak{X} \rightarrow \mathfrak{X}: x \mapsto [Z,x]$ is hermitian and hence satisisfies $\ker(T_Z) \cap \overline{T_Z(\mathfrak{X})} = \{ 0 \}$. Hence,  condition (2) of \cite[Lemma 5.1]{KPSS} is satisfied. Finally,
\[
R_j \mathfrak{N} \subseteq \overline{\cI_{\phi_f} \cI_{\phi_j}  \mathfrak{N}} \subseteq \overline{ \cI_{\psi_f}  \mathfrak{N}} \subseteq  \overline{ T  \mathfrak{N}} \subseteq \overline{ T  \mathfrak{X}}.
\]
Here, the second inclusion follows from \cite[Lemma 2.4]{KPSS}. This proves that  condition (3) of \cite[Lemma 5.1]{KPSS} is satisfied. Applying \cite[Lemma 5.1]{KPSS} yields that
\begin{equation}\label{EqnExtendedEqn}
[f(\cA), x] =  \sum_{j=1}^n \cI_{\phi_f} \cI_{\phi_j} [A_j, x],
\end{equation}
for every $x \in B(\cH)$ such that for all $1 \leq j \leq n$ we have $[A_j, x] \in \cS_p$. Applying  Theorem \ref{ThmIntegralEstimate} to (\ref{EqnExtendedEqn})  yields Theorem \ref{ThmOperatorLipschitz}.
\end{proof}

\begin{rmk}
In \cite[Theorem 5.3]{KPSS}, Theorem \ref{ThmOperatorLipschitz} was proved with the weaker estimate
\[
C_p \leq \frac{C \: p^{16}}{(p-1)^8}.
\]
 In \cite{KPSS} the norms of the two double operator integrals $W_f$ and $V_j$ appearing the proof of  \cite[Theorem 5.3]{KPSS} are estimated separately with constants that do not give the same sharp result as in Theorem \ref{ThmOperatorLipschitz}. The novelty of our proof is the fact that we use the main result of \cite[Theorem 3.1]{GeiMonSak} (see Theorem \ref{ThmUMDEstimate}) to give a direct estimate of  $\cI_\phi \cI_{\phi_j}$.
\end{rmk}

\begin{rmk}
  The estimate $C_p \leq \frac{C \: p^2}{1-p}$ given in Theorem \ref{ThmOperatorLipschitz} is the best possible in the sense that in fact
\[
C_p \sim  \frac{C \: p^2}{1-p}.
\]
\end{rmk}

\begin{cor}
Let $f: \mathbb{R} \rightarrow \mathbb{R}$ be a Lipschitz function with $\Vert f \Vert_{{\rm Lip}} \leq 1$.  Let $p \in (1,\infty)$. Let $ X, Y \in B(\cH)$ be self-adjoint operators such that $X - Y \in \cS_p$. Then $f(X) - f(Y) \in \cS_p$. Moreover, there exists a constant $C$ that is independent of $p\in(1, \infty)$  and the Lipschitz function $f$ such that
\begin{equation}
\Vert f(X) - f(Y) \Vert_p \leq \frac{Cp^2}{p-1} \Vert X - Y \Vert_p.
\end{equation}
\end{cor}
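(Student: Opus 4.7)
The plan is to reduce this corollary to Theorem \ref{ThmOperatorLipschitz} (in the case $n=1$) via the standard $2\times 2$ block matrix trick that converts the difference $X-Y$ into a commutator.

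Concretely, I would work on the Hilbert space $\cH \oplus \cH$ and define
\[
A = \begin{pmatrix} X & 0 \\ 0 & Y \end{pmatrix}, \qquad x = \begin{pmatrix} 0 & I \\ 0 & 0 \end{pmatrix}.
\]
Since $X$ and $Y$ are bounded self-adjoint, $A$ is a bounded self-adjoint operator on $\cH\oplus\cH$, so its spectral measure has compact support, and $x \in B(\cH\oplus\cH)$. A direct computation shows
\[
[A, x] = \begin{pmatrix} 0 & X-Y \\ 0 & 0 \end{pmatrix},
\]
whose Schatten $p$-norm equals $\Vert X - Y\Vert_p$; in particular $[A,x]\in \cS_p$ by hypothesis. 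By the Borel functional calculus applied blockwise,
\[
f(A) = \begin{pmatrix} f(X) & 0 \\ 0 & f(Y) \end{pmatrix},
\]
so
\[
[f(A), x] = \begin{pmatrix} 0 & f(X)-f(Y) \\ 0 & 0 \end{pmatrix}.
\]

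Now I would apply Theorem \ref{ThmOperatorLipschitz} with $n=1$ to $A$, $x$, and the Lipschitz function $f$. The theorem guarantees $[f(A),x]\in \cS_p$, which in the block form above is equivalent to $f(X)-f(Y) \in \cS_p$, and gives
\[
\Vert [f(A), x]\Vert_p \leq \frac{Cp^2}{p-1}\, \Vert [A, x]\Vert_p.
\]
Translating both norms back through the block identifications yields exactly
\[
\Vert f(X) - f(Y)\Vert_p \leq \frac{Cp^2}{p-1}\, \Vert X - Y\Vert_p
\]
with the same universal constant $C$, which is the desired estimate.

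There is no real obstacle here: the block matrix trick is purely algebraic, and all the analytical content is absorbed into Theorem \ref{ThmOperatorLipschitz}. The only minor points to verify are that the spectral measure of $A$ still has compact support (immediate from boundedness of $X,Y$) and that the off-diagonal identity block $x$ qualifies as an element of $B(\cH\oplus\cH)$ for the hypothesis of the theorem (which requires only boundedness of $x$ and $[A,x]\in\cS_p$, not $x\in\cS_p$).
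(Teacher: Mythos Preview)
Your proposal is correct and follows essentially the same approach as the paper: reduce to Theorem \ref{ThmOperatorLipschitz} with $n=1$ via the $2\times 2$ block matrix trick. The only cosmetic difference is that the paper takes $x=\begin{pmatrix}0&1\\1&0\end{pmatrix}$ rather than your upper-triangular choice; your version is in fact slightly cleaner since it avoids the harmless factor $2^{1/p}$ that appears on both sides with the symmetric flip.
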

\begin{proof}
Apply Theorem \ref{ThmOperatorLipschitz} to the case $n=1$ and with
\[
A_1 =  \left(
\begin{array}{ll}
X & 0 \\
0 & Y
\end{array}
\right),
\quad
x = \left(
\begin{array}{ll}
0 & 1 \\
1 & 0
\end{array}
\right).
\]
\end{proof}

Finally, we show that Theorem \ref{ThmOperatorLipschitz} implies
some variant of a weak $L^1$-type inequality. For $A \in B(\cH)$ a
compact operator and for $t \in [0,\infty)$, let
\[
\mu_t(A) = \inf \{ \Vert A p \Vert \mid p \in B(\cH) \textrm{ projection such that } \tau(p) \leq t \}.
\]
denote the decreasing rearrangement of singular values.
Let $\mathcal{L}^{1, \infty}$ be the weak $L^1$-space associated with $B(\cH)$.
It is defined as the space of all compact operators $A \in B(\cH)$ for which
\[
\Vert A \Vert_{1, \infty} = \sup_{t \in [0,\infty)} t  \mu_t(A) <
\infty.
\]
Consider also the space $M_{1, \infty}$ consisting of all compact operators $A$ such that
\[
\Vert A \Vert_{M_{1, \infty}} = \sup_{t \in [0,\infty)} \log(1+t)^{-1} \int_0^t \mu_s(A) ds  < \infty.
\]
We have a norm decreasing inclusion $\mathcal{L}^{1,\infty}\subseteq M_{1, \infty}$.   The following corollary is now a multi-variable version of the result obtained in \cite[Theorem 2.5 (ii)]{NazPel}.
\begin{cor}\label{CorWeakTypeEstimate}
Let $f: \mathbb{R}^n \rightarrow \mathbb{R}$ be a Lipschitz
function with $\Vert f \Vert_{{\rm Lip}} \leq 1$. Let $E$ be a
spectral measure on $\mathbb{R}^n$ and let $\mathcal{A} = (A_1,
\ldots, A_n)$ be defined as in (\ref{EqnArowDefinition}).  Let $x
\in B(\cH)$ be such that for all $1 \leq j \leq n$ we have $[A_j,
x] \in \cS_1$.  Then, also $[f(\cA), x] \in M_{1, \infty}$.
Moreover, there exists a constant $C$ that is independent of $E$
such that
\[
\Vert [ f(\cA), x ] \Vert_{M_{1, \infty}} \leq C \sum_{j=1}^n
\Vert [A_j, x] \Vert_1.
\]
\end{cor}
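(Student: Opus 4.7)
The plan is to deduce the weak-type $M_{1,\infty}$-estimate from the sharp $L^p$-inequality of Theorem \ref{ThmOperatorLipschitz} by a Yano-type extrapolation as $p \downarrow 1$, exploiting the asymptotic rate $C_p \sim (p-1)^{-1}$. Write $y=[f(\cA), x]$ and $K=\sum_{j=1}^n \Vert [A_j, x]\Vert_1$. For every $p \in (1,2]$, Theorem \ref{ThmOperatorLipschitz} gives
\[
\Vert y \Vert_p \leq \frac{4C}{p-1}\sum_{j=1}^n \Vert [A_j, x]\Vert_p.
\]
Combining this with the log-convexity of Schatten norms, $\Vert [A_j, x] \Vert_p\leq \Vert [A_j, x]\Vert_1^{1/p} \Vert [A_j, x]\Vert_\infty^{(p-1)/p}$, and the bound $\Vert [A_j, x]\Vert_\infty \leq 2\Vert A_j\Vert \Vert x\Vert$, one obtains an estimate of the form $\Vert y \Vert_p \leq K'/(p-1)$ in which $K'$ is comparable to $K$ for $p$ sufficiently close to $1$.

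Next, Hölder's inequality applied to the decreasing singular-value function gives
\[
\int_0^t \mu_s(y)\, ds \leq t^{(p-1)/p}\, \Vert y \Vert_p \qquad (p > 1,\; t > 0).
\]
For $t \geq e$, choose $p = p(t) = 1 + 1/\log(1+t) \in (1, 2]$; then $(p-1)^{-1} = \log(1+t)$ and a short calculation shows $t^{(p-1)/p} \leq e$, yielding $\log(1+t)^{-1} \int_0^t \mu_s(y)\, ds \leq e K'$. For $0 < t < e$ the desired inequality follows from the trivial bound $\int_0^t \mu_s(y)\, ds \leq t \Vert y \Vert_\infty$, the finiteness of $\Vert y \Vert_\infty$ (one may assume $f(0)=0$ after subtracting a constant from $f$, which does not change $y$; then $\Vert f(\cA)\Vert \leq \sum_j \Vert A_j\Vert$), and the boundedness of $t/\log(1+t)$ on $(0, e]$. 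Taking the supremum over $t > 0$ produces $\Vert y \Vert_{M_{1, \infty}} \leq C' K$ with $C'$ independent of the spectral measure $E$.

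The main obstacle is the appearance of the non-linear factor $K^{1/p}$ and the factor $\Vert [A_j, x]\Vert_\infty^{(p-1)/p}$ produced by the interpolation step: converting these into a clean linear dependence on $K$ requires a case distinction on whether $K \leq 1$ or $K > 1$, together with a scaling argument in $x$ that normalizes the competing quantities; a similar absorption accounts for the $\Vert y \Vert_\infty$-contribution from the small-$t$ range. An alternative route, possibly yielding cleaner bookkeeping of constants, would be to establish directly that the composition of double operator integrals $\cI_{\phi_f} \cI_{\phi_j}$ maps $\cS_1$ boundedly into $M_{1,\infty}$ (a Calder\'on--Zygmund-type statement for these DOIs, analogous to the role played by Theorem \ref{ThmUMDEstimate} in the $\cS_p$-case), and then to invoke the decomposition $[f(\cA), x] = \sum_j \cI_{\phi_f} \cI_{\phi_j} [A_j, x]$ already established in (\ref{EqnExtendedEqn}).
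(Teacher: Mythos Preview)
Your overall strategy---Yano-type extrapolation via H\"older's inequality on the singular-value function, choosing $p=p(t)$ adapted to $t$---is exactly the argument the paper gives. However, you have manufactured a difficulty that is not there. The ``main obstacle'' you describe disappears once you replace the log-convexity bound by the elementary monotonicity of Schatten norms: for $p\geq 1$ and $a\in\cS_1$ one has $\|a\|_p\leq\|a\|_1$, since the $\ell^p$-norm of a sequence is decreasing in $p$. Thus Theorem~\ref{ThmOperatorLipschitz} combined with $\|[A_j,x]\|_p\leq\|[A_j,x]\|_1$ gives directly
\[
\|y\|_p\;\leq\;\frac{Cp^2}{p-1}\,K
\]
with a clean linear dependence on $K=\sum_j\|[A_j,x]\|_1$, and no interpolation, scaling, or case distinction is needed. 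This is precisely what the paper does (it phrases it as the norm-decreasing inclusion $\cS_1\subseteq\cS_q$ for the conjugate exponent $q$).

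Your treatment of the small-$t$ range contains a genuine gap: bounding $\|y\|_\infty$ through $\|f(\cA)\|\leq\sum_j\|A_j\|$ produces a constant depending on the spectral measure $E$, contrary to the statement. The correct fix is again to stay inside the $\cS_p$-estimates already at hand: for any fixed $p_0\in(1,\infty)$, say $p_0=2$, Theorem~\ref{ThmOperatorLipschitz} together with monotonicity gives $\|y\|_{p_0}\leq C'K$, and then $\int_0^t\mu_s(y)\,ds\leq t^{1-1/p_0}\|y\|_{p_0}$ with $t^{1-1/p_0}/\log(1+t)$ bounded on any bounded interval. The paper's proof only treats large $s$ explicitly (its choice $p=\log s$ requires $s>e$), leaving the small-$s$ range to the same implicit device.
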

\begin{proof}
Put $T = [f(\mathcal{A}), x]$. Let $s >\!\!> 1$, set $p = \log(s)$ and $q = \frac{p}{p-1}$. Then, using the H\"older inequality, Theorem \ref{ThmOperatorLipschitz} and the inclusion $\cS_1 \subseteq \cS_q$, we find,
\[
\begin{split}
& \int_0^s \mu_t(T) ds \leq s^{\frac{1}{p}} \left( \int_0^s \mu_t(T)^q dt \right)^{\frac{1}{q}}  \\ \leq &
s^{\frac{1}{p}} \Vert T \Vert_q \leq s^{\frac{1}{p}} C q \sum_{j = 1}^n \Vert [ A_j, x] \Vert_q \leq
 s^{\frac{1}{p}} C q \sum_{j = 1}^n \Vert [ A_j, x] \Vert_1.
\end{split}
\]
Since $s^{\frac{1}{p}} q = e \frac{\log(s)}{\log(s)-1} \leq \log(s)$ for large $s$, we find that
 \[
\int_0^s \mu_t(T) ds \leq  C \log(s) \sum_{j = 1}^n \Vert [ A_j, x] \Vert_1,
\]
which implies that $\Vert T \Vert_{M_{1,\infty}} \leq C \sum_{j=1}^n \Vert [ A_j, x] \Vert_1$.
\end{proof}

\begin {rmk}
According to \cite[Theorem 4.5]{CRSS} (see also \cite[Theorem
2.1]{CGRS}) the $\Vert \cdot \Vert_{M_{1, \infty}}$-norm is
equivalent to the norm
\begin{equation}\label{EqnNormEquivalence}
\Vert A \Vert_{\zeta} = \limsup_{p\downarrow 1} (p-1) \Vert A \Vert_p.
\end{equation}
Using this observation, one may obtain an alternative proof of Corollary \ref{CorWeakTypeEstimate}.
\end{rmk}

\begin{rmk} The question whether the weak $L^1$-type
inequality holds, that is whether
\[
\Vert [ f(\cA), x ] \Vert_{1, \infty}  \leq
 C \sum_{j=1}^n \Vert [A_j, x] \Vert_1,
\]
 remains open.
\end{rmk}

\end{document}